\documentclass[12pt]{article}

\usepackage[margin=1in]{geometry}
\usepackage{amscd}
\usepackage{amssymb}
\usepackage{amsmath}
\usepackage{amsthm}
\usepackage{enumerate}
\usepackage{tikz}
\usepackage{wrapfig, framed, caption}
\usepackage{float}
\usetikzlibrary{arrows}
\usepackage[font=small,labelfont=bf]{caption}
\usepackage{xcolor}
\usepackage{mathtools}
\usepackage[colorlinks=true, linkcolor=blue, citecolor=blue,
pagebackref=true]{hyperref}
\usepackage{ulem}
\usepackage{cite}

\allowdisplaybreaks

\normalem
\parskip=5pt

%%%%%%%%%%%%%%%%%%%%%%%%%%%%%%%%%%%%%%%%%%%%%%%%%%%%%%%%%%%%%%%%
%%%     THEOREMS, STATEMENTS, DEFINITIONS  AND SO ON      %%%%%%
%%%%%%%%%%%%%%%%%%%%%%%%%%%%%%%%%%%%%%%%%%%%%%%%%%%%%%%%%%%%%%%%
\newtheorem{theorem}{Theorem}

\newtheorem*{theoremY*}{Theorem Y}

\newtheorem*{theoremAB*}{Theorem AB}

\newtheorem*{linearformsmtp*}{Mass transference principle for linear forms}
\newtheorem{corollary}{Corollary}
\newtheorem*{corollary*}{Corollary}
\newtheorem{proposition}{Proposition}
\newtheorem{lemma}{Lemma}

\newtheorem*{claim*}{Claim}

\theoremstyle{definition}
\newtheorem{definition}{Definition}
\theoremstyle{remark}

\newtheorem*{remark*}{Remark}

%%%%%%%%%%%%%%%%%%%%%%%%%%%%%%%%%%%%%%%%%%%%%%%%%%%%%%%%%%%%%%%%
% THE DEFINITION OF A NEW FAMILY OF FONTS AND RELATED COMMANDS %
%%%%%%%%%%%%%%%%%%%%%%%%%%%%%%%%%%%%%%%%%%%%%%%%%%%%%%%%%%%%%%%%
%\font\tenmsy=msbm10 scaled 1200 \font\sevenmsy=msbm7 scaled 1200
%\font\fivemsy=msbm5 scaled 1200
%\newfam\msyfam
%\textfont\msyfam=\tenmsy \scriptfont\msyfam=\sevenmsy
%\scriptscriptfont\msyfam=\fivemsy
%\newcommand{\Bbb}[1]{{\fam\msyfam\relax#1}}
\renewcommand{\Bbb}[1]{\mathbb{#1}}
         % often algebraic numbers

         % complex numbers

         % integer numbers

\newcommand{\N}{{\Bbb N}}         % natural numbers

         % rational numbers
\newcommand{\R}{{\Bbb R}}        % real numbers
    % positive real numbers

\newcommand{\Z}{{\Bbb Z}}         % integer numbers

%%%%%%%%%%%%%%%%%%%%%%%%%%%%%%%%%%%%%%%%%%%%%%%%%%%%%%%%%%%%%%%%
%%%%%%%%%%%%              \cal                      %%%%%%%%%%%%
%%%%%%%%%%%%%%%%%%%%%%%%%%%%%%%%%%%%%%%%%%%%%%%%%%%%%%%%%%%%%%%%
\newcommand{\cA}{{\cal A}}
\newcommand{\cB}{{\cal B}}

\newcommand{\cH}{{\cal H}}

\newcommand{\cJ}{{\cal J}}
\newcommand{\cK}{{\cal K}}

%%%%%%%%%%%%%%%%%%%%%%%%%%%%%%%%%%%%%%%%%%%%%%%%%%%%%%%
%                       GREEK                         %
%%%%%%%%%%%%%%%%%%%%%%%%%%%%%%%%%%%%%%%%%%%%%%%%%%%%%%%
\newcommand{\ve}{\varepsilon}
\newcommand{\vphi}{\varphi}

\newcommand{\La}{\Lambda}

\newcommand{\bal}{\beta_\alpha}

\newcommand{\btau}{\boldsymbol{\tau}}

%%%%%%%%%%%%%%%%%%%%%%%%%%%%%%%%%%%%%%%%%%%%%%%%%%%%%%%
%                       VECTORS                       %
%%%%%%%%%%%%%%%%%%%%%%%%%%%%%%%%%%%%%%%%%%%%%%%%%%%%%%%

\newcommand{\ba}{\mathbf{a}}
\newcommand{\bb}{\mathbf{b}}
\newcommand{\bc}{\mathbf{c}}
\newcommand{\bi}{\mathbf{i}}
\newcommand{\bj}{\mathbf{j}}
\newcommand{\x}{\mathbf{x}}
\newcommand{\y}{\mathbf{y}}
\newcommand{\p}{\mathbf{p}}

\newcommand{\bt}{\mathbf{t}}

%%%%%%%%%%%%%%%%%%%%%%%%%%%%%%%%%%%%%%%%%%%%%%%%%%%%%%%
%                 VARIOUS COMMANDS                    %
%%%%%%%%%%%%%%%%%%%%%%%%%%%%%%%%%%%%%%%%%%%%%%%%%%%%%%%

\newcommand{\diam}{\mathrm{diam}}
\newcommand{\dist}{\operatorname{dist}}

\DeclarePairedDelimiter{\abs}{\lvert}{\rvert}
\DeclarePairedDelimiter{\set}{\lbrace}{\rbrace}

\DeclarePairedDelimiter{\supn}{\|}{\|}

\DeclareMathOperator{\dimh}{\dim_H}
\DeclareMathOperator{\Mu}{M}

%\setlength{\parindent}{0pt} %--Uncomment to not indent paragraphs
%\linespread{2} %-- Uncomment for double spacing

%%%%%%%%%%%%%%%%%%%%%%%%%%%%%%%%%%%%%%%%%%%%%%%%%%%%%%%
%                   END OF MACROS                     %
%%%%%%%%%%%%%%%%%%%%%%%%%%%%%%%%%%%%%%%%%%%%%%%%%%%%%%%

\title{Weighted approximation in higher-dimensional missing digit sets}

\author{Demi Allen \\ (Exeter) \and Benjamin Ward \\ (York)}
\date{\today}
\begin{document}
\frenchspacing
\maketitle
\begin{abstract}
 In this note, we use the mass transference principle for rectangles, recently obtained by Wang and Wu (Math. Ann., 2021), to study the Hausdorff dimension of sets of ``weighted $\Psi$-well-approximable'' points in certain self-similar sets in $\R^d$. Specifically, we investigate weighted $\Psi$-well-approximable points in ``missing digit'' sets in $\R^d$. The sets we consider are natural generalisations of Cantor-type sets in $\R$ to higher dimensions and include, for example, \emph{four corner Cantor sets} (or \emph{Cantor dust}) in the plane with contraction ratio $\frac{1}{n}$ with $n \in \N$.
\end{abstract}

\section{Introduction and motivation}

The work of this current paper is motivated by a question posed in a seminal paper by Mahler \cite{Mahler_1984}; namely, how well can we approximate points in the middle-third Cantor set by:
\begin{enumerate}[(i)]
\itemsep-2pt
\item{rational numbers contained in the Cantor set, or} \label{Mahler part 1}
\item{rational numbers not in the Cantor set?}
\end{enumerate}

The first contribution to this question was arguably made by Weiss \cite{Weiss_2001}, who showed that almost no point in the middle-third Cantor set is \emph{very well approximable} with respect to the natural probability measure on the middle-third Cantor set. Since this initial contribution, numerous authors have contributed to answering these questions, approaching them from many different perspectives. For example, Levesley, Salp, and Velani \cite{LSV_2007} considered triadic approximation in the middle-third Cantor set, different subsets of the first named author, Baker, Chow, and Yu \cite{ABCY, ACY, Baker4} studied dyadic approximation in the middle-third Cantor set, Kristensen \cite{Kristensen_2006} considered approximation of points in the middle-third Cantor set by algebraic numbers, and Tan, Wang and Wu \cite{TWW_2021} have recently studied part (\ref{Mahler part 1}) by introducing a new notion of the ``height'' of a rational number. There has also been considerable effort invested in trying to generalise some of the above results to more general self-similar sets in $\R$ and also to various fractal sets in higher dimensions. See, for example, \cite{Allen-Barany, Baker1, Baker2, Baker3, Baker-Troscheit, BFR_2011, FS_2014, FS_2015, Khalil-Luethi, KLW_2005, PV_2005, WWX_Cantor, Yu_self-similar_2021} and references therein. The results in this paper can be thought of as a contribution to answering a natural $d$-dimensional weighted variation of part (\ref{Mahler part 1}) of Mahler's question. In particular, we will be interested in weighted approximation in $d$-dimensional ``missing digit'' sets. 

 Before we introduce the general framework we will consider here, we provide a very brief overview of some of the classical results on weighted Diophantine approximation in the ``usual'' Euclidean setting which provide further motivation for the current work. Fix $d \in \N$ and let $\Psi=(\psi_{1}, \dots , \psi_{d})$ be a $d$-tuple of approximating functions $\psi_{i}:\N \to [0, \infty)$ with $\psi_{i}(r) \to 0$ as $r \to \infty$ for each $1 \leq i \leq d$. The set of weighted simultaneously $\Psi$-well-approximable points in $\R^d$ is defined as
\begin{equation*}
W_{d}(\Psi):= \left\{ \x = (x_{1}, \dots , x_{d}) \in [0,1]^{d}: \left|x_{i}-\frac{p_{i}}{q}\right| < \psi_{i}(q)\, ,\; 1 \leq i \leq d, \text{ for i.m.} \; (p_1, \dots p_d, q) \in \Z^{d} \times \N \right\},
\end{equation*}
where i.m. denotes infinitely many. Note that the special case  where each approximating function is the same, that is $\Psi=(\psi, \dots , \psi)$, is generally the more intensively studied set. The case where each approximating function is potentially different, usually referred to as \emph{weighted simultaneous approximation}, is a natural generalisation of this. Simultaneous approximation (i.e. when the approximating function is the same in each coordinate axis) can generally be seen as a metric generalisation of Dirichlet's Theorem, whereas weighted simultaneous approximation is a metric generalisation of Minkowski's Theorem. Weighted simultaneous approximation has earned interest in the past few decades due to Schmidt  and natural connections to Littlewood’s Conjecture, see for example \cite{BRV_2016, BPV_2011, An_2013, An_2016, Schmidt_1983}. 
 
Motivated by classical works due to the likes of Khintchine \cite{Khintchine_24, Khintchine_25} and Jarn\'{i}k \cite{Jarnik_31} which tell us, respectively, about the Lebesgue measure and Hausdorff measures of the sets of classical simultaneously $\Psi$-well-approximable points (i.e. when $\Psi=(\psi, \dots, \psi)$), one may naturally also wonder about the ``size'' of sets of weighted simultaneously $\Psi$-well-approximable points in terms of Lebesgue measure, Hausdorff dimension, and Hausdorff measures. Khintchine \cite{Weighted_Khintchine} showed that if $\psi: \N \to [0,\infty)$ and $\Psi(q)=(\psi(q)^{\tau_1}, \dots, \psi(q)^{\tau_d})$ for some $\btau = (\tau_1,\dots,\tau_d) \in (0,1)^d$ with $\tau_1+\tau_2+\dots+\tau_d=1$, then
\begin{equation*}
\lambda_d(W_d(\Psi))=\begin{cases}
\quad 0  \quad  &\text{ if } \quad \sum_{q=1}^{\infty} q^d \psi(q) < \infty, \\[2ex]
\quad 1 \quad &\text{ if } \quad \sum_{q=1}^{\infty} q^d \psi(q) = \infty, \text{ and $q^d\psi(q)$ is monotonic}.
\end{cases}
\end{equation*}
Throughout we use $\lambda_d(X)$ to denote the $d$-dimensional Lebesgue measure of a set \mbox{$X \subset \R^d$}. For more general approximating functions $\Psi(q)=(\psi_1(q), \dots, \psi_d(q))$, with $\prod_{i=1}^{d}\psi_i(q)$ monotonically decreasing and $\psi_{i}(q)<q^{-1}$ for each $1\leq i \leq d$, it has been proved, see \cite{Cassels_1950, Weighted_Khintchine, Schmidt_1960, Gallagher_1962}, that
\begin{equation*}
\lambda_d(W_d(\Psi))=\begin{cases}
\quad 0  \quad  &\text{ if } \quad \sum_{q=1}^{\infty} q^d \psi_1(q)\dots \psi_d(q) < \infty, \\[2ex]
\quad 1 \quad &\text{ if } \quad \sum_{q=1}^{\infty} q^d \psi_1(q)\dots \psi_d(q) = \infty.
\end{cases}
\end{equation*}
For approximating functions of the form $\Psi(q)=(\psi_1(q),\dots,\psi_d(q))$ where
\[\psi_i(q)=q^{-t_{i}-1}, \quad \text{for some vector } \bt=(t_{1}, \dots , t_{d}) \in \R^{d}_{>0},\] 
Rynne \cite{Rynne_1998} proved that if $\sum_{i=1}^{d}t_{i} \geq 1$, then 
\begin{equation*}
\dimh W_{d}(\Psi)=\min_{1 \leq k \leq d} \left\{ \frac{1}{t_{k}+1} \left( d+1+\sum_{i:t_{k} \geq t_{i}}(t_{k}-t_{i})\right) \right\}.
\end{equation*}
 Throughout, we write $\dimh{X}$ to denote the \emph{Hausdorff dimension} of a set $X \subset \R^d$, we refer the reader to \cite{Falconer} for definitions and properties of Hausdorff dimension and Hausdorff measures. Rynne's result has recently been extended to a more general class of approximating functions by Wang and Wu \cite[Theorem 10.2]{WW2019}.

In recent years, there has been rapidly growing interest in whether similar statements can be proved when we intersect $W_d(\Psi)$ with natural subsets of $[0,1]^{d}$, such as submanifolds or fractals. The study of such questions has been further incentivised by many remarkable works of the recent decades, such as \cite{KLW_2005, KM_1998, VV_2006}, and applications to other areas, such as wireless communication theory \cite{ABLVZ_2016}.

\section{$d$-dimensional missing digit sets and main results}

In this paper we study weighted approximation in $d$-dimensional missing digit sets, which are natural extensions of classical missing digit sets (i.e. generalised Cantor sets) in $\R$ to higher dimensions. A very natural class of higher dimensional missing digit sets included within our framework are the \emph{four corner Cantor sets} (or \emph{Cantor dust}) in $\R^2$ with contraction ratio $\frac{1}{n}$ for $n \in \N$.

Throughout we consider $\R^d$ equipped with the supremum norm, which we denote by $\supn{\cdot}$. For subsets $X,Y \subset \R^{d}$ we define $\diam(X) = \sup\{\|u-v\|:u,v \in X\}$ and $\dist(X,Y)= \inf\{\|x-y\|: x \in X, y \in Y\}$. We define \emph{higher-dimensional missing digit sets} via iterated function systems as follows. Let $b \in \N$ be such that $b \geq 3$ and let $J_1,\dots,J_d$ be proper subsets of $\set{0,1,\dots,b-1}$ such that for each $1 \leq i \leq d$, we have
\[N_i:=\#J_i\geq 2.\]
Suppose $J_i=\set{a^{(i)}_1,\dots,a^{(i)}_{N_i}}$. For each $1 \leq i \leq d$, we define the iterated function system 
\[\Phi^i=\left\{f_j:[0,1]\to[0,1]\right\}_{j=1}^{N_i} \quad \text{where} \quad f_j(x)=\frac{x+a^{(i)}_j}{b}.\]
Let $K_i$ be the attractor of $\Phi^i$; that is, $K_i \subset \R$ is the unique non-empty compact set which satisfies
\[K_i=\bigcup_{j=1}^{N_i}{f_j(K_i)}.\]
 We know that such a set exists due to work of Hutchinson \cite{Hutchinson}. Equivalently $K_i$ is the set of $x \in [0,1]$ for which there exists a base $b$ expansion of $x$ consisting only of digits from $J_i$. In view of this, we will also use the notation $K_{b}(J_i)$ to denote this set. For example, in this notation, the classical middle-third Cantor set is precisely the set $K_3(\{0,2\})$. We call the sets $K_b(J_i)$ \emph{missing digit sets} since they consist of numbers with base-$b$ expansions missing specified digits. Note that, for each $1 \leq i \leq d$, the Hausdorff dimension of $K_i$, which we will denote by $\gamma_i$, is given by 
\[\gamma_i = \dimh{K_i} = \frac{\log{N_i}}{\log{b}}.\]

We will be interested in the \emph{higher-dimensional missing digit set}
\[K:=\prod_{i=1}^{d}{K_i}\]
formed by taking the Cartesian product of the sets $K_i$, $1 \leq i \leq d$. As a natural concrete example, we note that the \emph{four corner Cantor set} in $\R^2$ with contraction ratio $\frac{1}{b}$ (with $b \geq 3$ an integer) can be written in our notation as $K_{b}(\{0,b-1\}) \times K_{b}(\{0,b-1\})$. 

We note that $K$ is the attractor of the iterated function system 
\[\Phi=\left\{f_{(j_1,\dots,j_d)}:[0,1]^d \to [0,1]^d, (j_1,\dots,j_d) \in \prod_{i=1}^{d}{J_i}\right\}\]
where
\[f_{(j_1,\dots,j_d)}\begin{pmatrix} x_1 \\ \vdots \\ x_d\end{pmatrix} = \begin{pmatrix}\frac{x_1+j_1}{b} \\ \vdots \\ \frac{x_d+j_d}{b}\end{pmatrix}.\]
Notice that $\Phi$ consists of 
\[N:=\prod_{i=1}^{d}{N_i}\]
maps and so, for convenience, we will write 
\[\Phi = \left\{g_j:[0,1]^d\to[0,1]^d\right\}_{j=1}^{N}\]
where the $g_j$'s are just the maps $f_{(j_1,\dots,j_d)}$ from above written in some order. The Hausdorff dimension of $K$, which we denote by $\gamma$, is
\[\gamma = \dimh{K} = \frac{\log{N}}{\log{b}}.\]

We will write
\[\La = \set{1,2,\dots,N} \qquad \text{and} \qquad \La^*=\bigcup_{n=0}^{\infty}{\La^n}.\]
We write $\bi$ to denote a word in $\La$ or $\La^*$ and we write $|\bi|$ to denote the length of $\bi$. For $\bi \in \Lambda^*$ we will also use the shorthand notation
\[g_{\bi} = g_{i_1} \circ g_{i_2} \circ \dots \circ g_{i_{|\bi|}}.\]
We adopt the convention that $g_{\emptyset}(x)=x$.

Let $\Psi: \La^* \to [0,\infty)$ be an approximating function. For each $x \in K$, we define the set 
\begin{align*} 
W(x,\Psi)=\left\{y \in K: \supn{y-g_{\bi}(x)}<\Psi(\bi) \text{ for infinitely many } \bi \in \La^*\right\}.
\end{align*}
The following theorem is a special case of \cite[Theorem 1.1]{Allen-Barany}. 

\begin{theorem} \label{self-similar approx thm}
Let $\Phi$ and $K$ be as defined above. Let $x \in K$ and let $\vphi: \N \to [0,\infty)$ be a monotonically decreasing function. Let $\Psi(\bi) = \diam(g_{\bi}(K))\vphi(|\bi|)$. Then, for $s>0$, 
\[
\cH^{s}(W(x,\Psi))=
\begin{cases}
0 & \text{if} \quad \sum_{\bi \in \La^*}{\Psi(\bi)^s}<\infty, \\[2ex]
\cH^s(K) & \text{if} \quad \sum_{\bi \in \La^*}{\Psi(\bi)^s}=\infty.
\end{cases}
\]
\end{theorem}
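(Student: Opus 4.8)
The plan is to deduce Theorem~\ref{self-similar approx thm} from \cite[Theorem 1.1]{Allen-Barany}, so the substance is to verify that $\Phi$, $K$, and $\Psi$ satisfy that theorem's hypotheses; I will also indicate the mechanism behind each half of the dichotomy. First I would check that $\Phi=\{g_j\}_{j=1}^N$ is a system of similarities of $\R^d$ with common contraction ratio $b^{-1}$ satisfying the open set condition: taking the open set $U=(0,1)^d$, every map sends $U$ into $U$, while any two distinct maps $g_{(j_1,\dots,j_d)}$ and $g_{(j_1',\dots,j_d')}$ differ in some coordinate $i$ with $a^{(i)}_{j_i}\neq a^{(i)}_{j_i'}$, so their images have disjoint $i$-th coordinate projections and are therefore disjoint. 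Consequently $K$ is the attractor of $\Phi$, by \cite{Hutchinson} one has $0<\cH^\gamma(K)<\infty$, $K$ supports a measure $\mu$ with $\mu(B(y,r))\asymp r^\gamma$ uniformly over $y\in K$ and $0<r\le1$ (where $B(y,r)$ denotes the open $\supn{\cdot}$-ball), and $\diam(g_\bi(K))=b^{-|\bi|}\diam(K)$. Since $\vphi$ is monotonic this makes $\Psi(\bi)=\diam(g_\bi(K))\vphi(|\bi|)$ of precisely the admissible shape, and \cite[Theorem 1.1]{Allen-Barany} applies.

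For the convergence half the mechanism is a Hausdorff--Cantelli covering estimate. For each $n$ we have $W(x,\Psi)\subseteq\bigcup_{|\bi|\ge n}\bigl(B(g_\bi(x),\Psi(\bi))\cap K\bigr)$, and since $\vphi$ is bounded, $\Psi(\bi)\lesssim b^{-|\bi|}$, so for the integer $m=m(\bi)$ with $b^{-m}$ comparable to $\Psi(\bi)$ the set $B(g_\bi(x),\Psi(\bi))\cap K$ is covered by at most $3^d$ of the level-$m$ cylinders $g_\bj(K)$, each of diameter comparable to $\Psi(\bi)$. This produces a cover of $W(x,\Psi)$ by sets of diameter tending to $0$ as $n\to\infty$, the sum of the $s$-th powers of whose diameters is $\lesssim\sum_{|\bi|\ge n}\Psi(\bi)^s\to0$; hence $\cH^s(W(x,\Psi))=0$.

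For the divergence half -- the crux -- I would first observe that the $\limsup$ set formed from the \emph{unshrunk} balls already fills $K$: given $y\in K$ and $n\in\N$, choose $\bj\in\La^n$ with $y\in g_\bj(K)$; since $g_\bj(x)\in g_\bj(K)$ as well, $\supn{y-g_\bj(x)}\le\diam(g_\bj(K))$, so $y\in B(g_\bj(x),2\diam(g_\bj(K)))$ for one $\bj$ of every length, hence for infinitely many $\bj$. Thus $\limsup_\bi B(g_\bi(x),2\diam(g_\bi(K)))=K$, a set of full $\cH^\gamma$-measure. One then invokes a mass transference principle on the Ahlfors-regular space $K$ to pass from this full-measure $\limsup$ of the unshrunk balls to the Hausdorff measure of the shrunk $\limsup$ set $W(x,\Psi)$, obtaining $\cH^s(W(x,\Psi))=\cH^s(K)$; the divergence condition $\sum_\bi\Psi(\bi)^s=\sum_{n\ge0}N^n\bigl(b^{-n}\diam(K)\vphi(n)\bigr)^s=\infty$ is exactly what guarantees that the shrunk balls $B(g_\bi(x),\Psi(\bi))$ still capture a full-measure subset of $K$.

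The main obstacle is this last transference step: one needs a mass transference principle valid intrinsically on the fractal $K$ rather than on $\R^d$. The route I would take to prove it directly is the Beresnevich--Velani method: construct inside $W(x,\Psi)$ a Cantor-like tree of cylinders of $K$, spread a mass distribution on its limit set whose local scaling is governed by $s$ and the divergent series, and apply the mass distribution principle together with the Ahlfors regularity and open set condition of $K$. Since this is precisely the content of \cite[Theorem 1.1]{Allen-Barany}, the cleanest plan is simply to quote that theorem after the hypothesis check above.
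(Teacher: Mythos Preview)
Your proposal is correct and takes essentially the same approach as the paper: the paper does not prove this theorem but simply states it as a special case of \cite[Theorem 1.1]{Allen-Barany}, which is exactly what you propose to invoke. Your additional verification of the open set condition for $\Phi$ and the sketch of the Hausdorff--Cantelli and mass-transference mechanisms go beyond what the paper records, but they are accurate and consistent with the cited result.
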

Of particular interest to us here is the following easy corollary.
\begin{corollary} \label{simultaneous corollary}
Let $\Phi$ and $K$ be as above and suppose that $\diam(K)=1$. Let $\psi: \N \to [0,\infty)$  be such that $b^n\psi(b^n)$ is monotonically decreasing and define $\vphi: \N \to [0,\infty)$ by $\vphi(n)=b^n\psi(b^n)$. Let $\Psi(\bi) = \diam(g_{\bi}(K))\vphi(|\bi|)$. Recall that $\gamma = \dimh{K}$. Then, for $x \in K$, we have
\[
\cH^{\gamma}(W(x,\Psi))=
\begin{cases}
0 & \text{if} \quad \sum_{n=1}^{\infty}{(b^n\psi(b^n))^{\gamma}}<\infty, \\[2ex]
\cH^{\gamma}(K) & \text{if} \quad \sum_{n=1}^{\infty}{(b^n\psi(b^n))^{\gamma}}=\infty.
\end{cases}
\]
\end{corollary}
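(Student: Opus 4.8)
The plan is to apply Theorem~\ref{self-similar approx thm} with the exponent $s=\gamma$ and the specific choice $\vphi(n)=b^n\psi(b^n)$, and then to reconcile the abstract convergence condition $\sum_{\bi\in\La^*}\Psi(\bi)^\gamma$ appearing there with the concrete series $\sum_{n=1}^\infty(b^n\psi(b^n))^\gamma$ in the statement. The first thing to check is that $\vphi$ is an admissible input: by hypothesis $b^n\psi(b^n)$ is monotonically decreasing, so $\vphi(n)=b^n\psi(b^n)$ is a monotonically decreasing function $\N\to[0,\infty)$, exactly as Theorem~\ref{self-similar approx thm} requires, and the definition of $\Psi(\bi)=\diam(g_{\bi}(K))\vphi(|\bi|)$ is the same in both statements.

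Next comes the one small computation. Each generator $g_j$ of $\Phi$ is a similarity of $\R^d$ (in the supremum norm) with contraction ratio $b^{-1}$, so for any word $\bi\in\La^*$ the composition $g_{\bi}$ is a similarity with ratio $b^{-|\bi|}$; hence $\diam(g_{\bi}(K))=b^{-|\bi|}\diam(K)=b^{-|\bi|}$, using the hypothesis $\diam(K)=1$. Therefore $\Psi(\bi)=b^{-|\bi|}\vphi(|\bi|)=b^{-|\bi|}\cdot b^{|\bi|}\psi(b^{|\bi|})=\psi(b^{|\bi|})$, which depends only on the length of $\bi$.

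Now I would split the sum $\sum_{\bi\in\La^*}\Psi(\bi)^\gamma$ according to word length. Since $\#\La^n=N^n$ and $N=b^\gamma$ (because $\gamma=\log N/\log b$), we have $\#\La^n=(b^n)^\gamma$, so
\[
\sum_{\bi\in\La^*}\Psi(\bi)^\gamma=\sum_{n\ge 0}\#\La^n\cdot\psi(b^n)^\gamma=\sum_{n\ge 0}(b^n)^\gamma\psi(b^n)^\gamma=\sum_{n\ge 0}\bigl(b^n\psi(b^n)\bigr)^\gamma.
\]
The single term $n=0$ contributes the fixed finite constant $\psi(1)^\gamma$, so this series converges if and only if $\sum_{n=1}^\infty(b^n\psi(b^n))^\gamma$ does. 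Substituting this equivalence into Theorem~\ref{self-similar approx thm} with $s=\gamma$ yields precisely the claimed dichotomy for $\cH^\gamma(W(x,\Psi))$, with the divergence case giving $\cH^\gamma(K)$.

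There is essentially no obstacle: the corollary is a direct specialisation of Theorem~\ref{self-similar approx thm}. The only point meriting a moment's care is the bookkeeping identity $\#\La^n=N^n=(b^n)^\gamma$, which is what converts the sum over words into the sum over powers of $b$; the monotonicity and $\diam(K)=1$ hypotheses are imposed exactly so that this conversion, and the hypothesis of Theorem~\ref{self-similar approx thm}, go through cleanly.
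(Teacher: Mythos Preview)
Your proof is correct and follows essentially the same route as the paper: apply Theorem~\ref{self-similar approx thm} with $s=\gamma$, then use $\diam(g_{\bi}(K))=b^{-|\bi|}$ and $\#\La^n=N^n=(b^n)^\gamma$ to rewrite $\sum_{\bi\in\La^*}\Psi(\bi)^\gamma$ as $\sum_n (b^n\psi(b^n))^\gamma$. You are in fact slightly more careful than the paper in explicitly verifying the monotonicity hypothesis on $\vphi$ and in handling the $n=0$ term.
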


\begin{proof}
It follows from Theorem \ref{self-similar approx thm} that
\[
\cH^{\gamma}(W(x,\Psi))=
\begin{cases}
0 & \text{if} \quad \sum_{\bi \in \La^*}{\Psi(\bi)^{\gamma}}<\infty, \\[2ex]
\cH^{\gamma}(K) & \text{if} \quad \sum_{\bi \in \La^*}{\Psi(\bi)^{\gamma}}=\infty.
\end{cases}
\]
However, in this case, by the definition of $\vphi$ and our assumption that $\diam(K)=1$, we have
\[\sum_{\bi \in \La^*}{\Psi(\bi)^{\gamma}} = \sum_{n=1}^{\infty}{\sum_{\substack{\bi \in \La^* \\ |\bi| = n}}{(\diam(g_{\bi}(K))\vphi(|\bi|))^{\gamma}}} = \sum_{n=1}^{\infty}{\sum_{\substack{\bi \in \La^* \\ |\bi| = n}}{\psi(b^n)^{\gamma}}} = \sum_{n=1}^{\infty}{N^n \psi(b^n)^{\gamma}} = \sum_{n=1}^{\infty}{(b^n \psi(b^n))^{\gamma}}. \qedhere \]
\end{proof}

For an approximating function $\psi: \N \to [0,\infty)$, define
\begin{align} \label{W x psi}
W(x,\psi)=\left\{y \in K: \supn{y-g_{\bi}(x)} < \psi(b^{|\bi|}) \text{ for infinitely many } \bi \in \La^*\right\}.
\end{align}
In essence, $W(x,\psi)$ is a set of ``simultaneously $\psi$-well-approximable'' points in $K$. The following statement regarding these sets can be deduced immediately from Corollary \ref{simultaneous corollary}.

\begin{corollary} \label{W x psi corollary}
Let $\Phi$ and $K$ be defined as above and let $\psi: \N \to [0,\infty)$ be such that $b^n\psi(b^n)$ is monotonically decreasing. Suppose further that $\diam(K)=1$. Then, 
\[
\cH^{\gamma}(W(x,\psi))=
\begin{cases}
0 & \text{if} \quad \sum_{n=1}^{\infty}{(b^n\psi(b^n))^{\gamma}}<\infty, \\[2ex]
\cH^{\gamma}(K) & \text{if} \quad \sum_{n=1}^{\infty}{(b^n\psi(b^n))^{\gamma}}=\infty.
\end{cases}
\]
\end{corollary}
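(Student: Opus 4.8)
The plan is to deduce Corollary \ref{W x psi corollary} directly from Corollary \ref{simultaneous corollary} by identifying the two approximation sets $W(x,\psi)$ and $W(x,\Psi)$, where $\Psi$ is the approximating function built from $\vphi(n)=b^n\psi(b^n)$ as in the statement of Corollary \ref{simultaneous corollary}. The key observation is that for any word $\bi\in\La^*$ with $|\bi|=n$, each generator map $g_j$ contracts by the factor $\tfrac1b$ in every coordinate (in the supremum norm), so $g_{\bi}$ is a similarity with ratio $b^{-n}$; hence $\diam(g_{\bi}(K)) = b^{-n}\diam(K) = b^{-n}$ using the hypothesis $\diam(K)=1$. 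Consequently
\[
\Psi(\bi) = \diam(g_{\bi}(K))\,\vphi(|\bi|) = b^{-n}\cdot b^n\psi(b^n) = \psi(b^n) = \psi(b^{|\bi|}),
\]
which is exactly the radius appearing in the definition \eqref{W x psi} of $W(x,\psi)$.

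With this identity in hand, the sets coincide on the nose: $W(x,\psi) = W(x,\Psi)$, since ``$\supn{y-g_{\bi}(x)}<\psi(b^{|\bi|})$ for infinitely many $\bi$'' and ``$\supn{y-g_{\bi}(x)}<\Psi(\bi)$ for infinitely many $\bi$'' are the same condition. I would then simply invoke Corollary \ref{simultaneous corollary}, whose hypotheses — $b^n\psi(b^n)$ monotonically decreasing, $\diam(K)=1$ — are precisely those assumed here, to conclude that $\cH^\gamma(W(x,\psi)) = \cH^\gamma(W(x,\Psi))$ equals $0$ or $\cH^\gamma(K)$ according to whether $\sum_{n=1}^\infty (b^n\psi(b^n))^\gamma$ converges or diverges. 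This gives the claimed dichotomy verbatim.

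Honestly, there is essentially no obstacle here: the corollary is a cosmetic reformulation of the previous one, trading the ``$\Phi$-weighted'' approximating function $\Psi$ for the transparent radius $\psi(b^{|\bi|})$. The only point requiring a word of care is the verification that $\diam(g_{\bi}(K)) = b^{-|\bi|}$, i.e.\ that the compositions $g_{\bi}$ genuinely scale by $b^{-|\bi|}$ in the supremum norm; this is immediate from the explicit form of the maps $f_{(j_1,\dots,j_d)}$, each of which is $\x\mapsto (\x+\bj)/b$, a homothety of ratio $1/b$, so that $g_{\bi}$ is a homothety of ratio $b^{-|\bi|}$ and $\diam(g_{\bi}(K)) = b^{-|\bi|}\diam(K)$. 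One might optionally remark that the monotonicity hypothesis on $b^n\psi(b^n)$ is exactly what is needed to apply Theorem \ref{self-similar approx thm} (via Corollary \ref{simultaneous corollary}) with $\vphi(n)=b^n\psi(b^n)$, since that theorem requires $\vphi$ to be monotonically decreasing.
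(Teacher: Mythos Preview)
Your proposal is correct and matches the paper's approach exactly: the paper simply states that Corollary~\ref{W x psi corollary} ``can be deduced immediately from Corollary~\ref{simultaneous corollary}'', and your argument---observing that $\diam(g_{\bi}(K))=b^{-|\bi|}$ under the hypothesis $\diam(K)=1$, so that $\Psi(\bi)=\psi(b^{|\bi|})$ and hence $W(x,\psi)=W(x,\Psi)$---is precisely the intended one-line deduction.
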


Here we will be interested in weighted versions of the sets $W(x,\psi)$. More specifically, for $\bt=(t_1,\dots,t_d) \in \R^d_{\geq 0}$ and for $x \in K$, we define the \emph{weighted approximation set}
\[W(x,\psi,\bt) = \left\{\y=(y_1,\dots,y_d) \in K: |y_j-g_{\bi}(x)_j|<\psi(b^{|\bi|})^{1+t_i}, 1 \leq j \leq d, \text{ for i.m. } \bi \in \La^*\right\}.\]
Here we are using the notation $g_{\bi}(x)=(g_{\bi}(x)_1,\dots,g_{\bi}(x)_d)$. Our main results relating to the Hausdorff dimension of sets of the form $W(x,\psi,\bt)$ are as follows.

\begin{theorem} \label{lower bound theorem}
Let $\Phi$ and $K$ be defined as above. Recall that $\gamma = \dimh{K}$ and $\gamma_i=\dimh{K_i}$ for each $1 \leq i \leq d$. Let $\psi: \N \to [0, \infty)$ be such that $b^n\psi(b^n)$ is monotonically decreasing. Further suppose that $\diam(K)=1$ and
\[\sum_{n=1}^{\infty}{(b^n\psi(b^n))^{\gamma}} = \infty.\]
Then, for $\bt = (t_1,\dots,t_d) \in \R^d_{\geq 0}$, we have
\[\dimh{W(x,\psi,\bt)} \geq \min_{1 \leq k \leq d}\left\{\frac{1}{1+t_k}\left(\gamma + \sum_{j:t_j \leq t_k}{(t_k-t_j)\gamma_{j}}\right)\right\}.\]
\end{theorem}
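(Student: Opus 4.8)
The plan is to obtain the lower bound by constructing, for each $1\le k\le d$, a suitable Cantor-like subset of $W(x,\psi,\bt)$ whose Hausdorff dimension is at least the $k$-th term in the minimum, and then taking the best $k$. The natural tool is the mass transference principle for rectangles of Wang and Wu, which is exactly designed to convert a full-measure statement for balls (here Corollary \ref{W x psi corollary}, giving $\cH^\gamma(W(x,\psi))=\cH^\gamma(K)$ under the divergence hypothesis) into a Hausdorff dimension lower bound for the associated shrunk \emph{rectangles} $W(x,\psi,\bt)$. So the first step is to set up the framework: the ambient space is $K=\prod_{i=1}^d K_i$, which is Ahlfors regular of dimension $\gamma=\sum\gamma_i$ and is a product of Ahlfors $\gamma_i$-regular spaces $K_i$; the "balls" coming from Corollary \ref{W x psi corollary} are the cubes $\prod_j B(g_{\bi}(x)_j,\psi(b^{|\bi|}))$ centred at points $g_{\bi}(x)\in K$ with common radius $\psi(b^{|\bi|})$; and the rectangles we want are $\prod_j B(g_{\bi}(x)_j,\psi(b^{|\bi|})^{1+t_j})$.

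Next I would verify the hypotheses of the rectangular mass transference principle in this product-of-Ahlfors-regular setting. The key points to check are: (a) the centres $g_{\bi}(x)$ are genuinely points of $K$, which holds since $x\in K$ and each $g_{\bi}$ maps $K$ into $K$; (b) the "local ubiquity"/full-measure input, which is precisely $\cH^\gamma(W(x,\psi))=\cH^\gamma(K)$ from Corollary \ref{W x psi corollary} together with the divergence assumption $\sum_n (b^n\psi(b^n))^\gamma=\infty$; and (c) the exponents in each coordinate: writing the $i$-th coordinate contraction as $\psi(b^{|\bi|})^{1+t_i}=\bigl(\psi(b^{|\bi|})\bigr)^{a_i}$ with $a_i=1+t_i\ge 1$, the Wang–Wu theorem yields that $\dimh W(x,\psi,\bt)$ is at least the exponent they prescribe for such a configuration. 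For a product of Ahlfors $\gamma_i$-regular sets with uniform shrinking rate $\psi$ and coordinate exponents $a_i$, that exponent is exactly
\[
\min_{1\le k\le d}\left\{\frac{1}{a_k}\left(\sum_{j:a_j\le a_k}\gamma_j a_j \;+\; \sum_{j:a_j> a_k}\gamma_j a_k\right)\cdot\frac{1}{1}\right\},
\]
wait — more precisely the standard formula reads
\[
\min_{1\le k\le d}\left\{\frac{\sum_{j:a_j\le a_k}\gamma_j + a_k\sum_{j:a_j>a_k}\gamma_j + \sum_{j:a_j\le a_k}(a_k-a_j)\gamma_j}{a_k}\right\},
\]
and substituting $a_j=1+t_j$ and simplifying (using $\gamma=\sum_j\gamma_j$ and $\sum_{j:a_j\le a_k}\gamma_j + a_k\sum_{j:a_j>a_k}\gamma_j + \sum_{j:t_j\le t_k}(t_k-t_j)\gamma_j = \gamma + \sum_{j:t_j\le t_k}(t_k-t_j)\gamma_j$ after regrouping the $a_k$-weighted terms) collapses exactly to $\frac{1}{1+t_k}\bigl(\gamma+\sum_{j:t_j\le t_k}(t_k-t_j)\gamma_j\bigr)$. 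So the final step is this bookkeeping: match our data to the Wang–Wu hypotheses and perform the algebraic simplification of their dimension formula to the stated expression.

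The main obstacle I anticipate is \emph{not} the algebra but making the ubiquity/regularity input rigorous: the rectangular mass transference principle requires the limsup set of balls to have full $\cH^\gamma$-measure \emph{and} requires a quantitative local ubiquity statement (roughly, that the balls $B(g_{\bi}(x),\psi(b^{|\bi|}))$ are well-distributed at every scale, with measure comparable to the right power of the radius on every small ball of $K$). Corollary \ref{W x psi corollary} gives the full-measure conclusion but one must extract from its proof (i.e. from Theorem \ref{self-similar approx thm} / \cite{Allen-Barany}) the stronger local ubiquity with the correct scaling, and one must confirm the "rectangles-to-balls" comparison constants behave uniformly — this is where the self-similarity of $K$ and the fact that $\diam g_{\bi}(K)=b^{-|\bi|}$ get used. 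A secondary technical point is confirming that $K$, as a product of Ahlfors regular sets, satisfies whatever regularity axioms the Wang–Wu framework imposes (it does, since a product of Ahlfors $\gamma_i$-regular metric spaces with the sup metric is Ahlfors $\sum\gamma_i$-regular), so that their theorem applies verbatim with $\gamma=\sum_i\gamma_i$ and the per-coordinate dimensions $\gamma_i$.
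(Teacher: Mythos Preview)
Your approach is essentially the paper's: apply the Wang--Wu rectangular mass transference principle (Theorem~\ref{Theorem WW}) with $X_i=K_i$, $m_i=\mu_i$, resonant points $R_{\bi}^x=g_{\bi}(x)$, $\rho(u)=\psi(b^u)$, $\ba=(1,\dots,1)$ and $\kappa=0$, then simplify the resulting dimension formula by a case analysis on $A\in\cA=\{1\}\cup\{1+t_i\}$ to obtain the stated expression. Your anticipated ``main obstacle'' is not one: the version of Wang--Wu stated and used in the paper requires only the full-measure hypothesis $m\bigl(\limsup_{\alpha}\Delta(R_\alpha,\rho(\beta_\alpha)^{\ba})\bigr)=m(X)$, not a quantitative local-ubiquity statement, so Corollary~\ref{W x psi corollary} is already the right input --- the one small step you do not mention explicitly is passing from $\cH^\gamma$-full-measure to full measure for the product measure $\Mu=\prod_i\mu_i$, which the paper handles via Proposition~\ref{measures_are_equal} (Lemma~\ref{measure equivalence lemma} would also suffice).
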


If $\psi$ satisfies more stringent divergence conditions, then we an show that the lower bound given in Theorem \ref{lower bound theorem} in fact gives an exact formula for the Hausdorff dimension of $W(x,\psi,\bt)$. More precisely, we are able to show the following.

\begin{theorem} \label{upper bound theorem}
Let $\Phi$ and $K$ be as defined above. Let $x \in K$ and let $\psi:\N \to [0,\infty)$ be such that:
\begin{enumerate}[(i)]
    \item{$b^n\psi(b^n)$ is monotonically decreasing,}
    \item{$\displaystyle{\sum_{n=1}^{\infty}{(b^n \psi(b^n))^{\gamma}}=\infty}, \quad$ and}
    \item{$\displaystyle{\sum_{n=1}^{\infty}{(b^n \psi(b^n)^{1+\ve})^{\gamma}}<\infty} \quad$ for every $\ve>0$.}
\end{enumerate}
Then, for $\bt = (t_1, \dots, t_d) \in \R^d_{\geq 0}$, we have 
\[\dimh{W(x,\psi,\bt)} = \min_{1 \leq k \leq d}\left\{\frac{1}{1+t_k}\left(\gamma + \sum_{j:t_j \leq t_k}{(t_k-t_j)\gamma_{j}}\right)\right\}.\]
\end{theorem}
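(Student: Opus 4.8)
The lower bound is already available: under hypotheses (i) and (ii), and after the harmless normalisation $\diam(K)=1$, Theorem \ref{lower bound theorem} gives
\[
\dimh W(x,\psi,\bt)\ \geq\ \min_{1\leq k\leq d}\left\{\frac{1}{1+t_k}\left(\gamma+\sum_{j:t_j\leq t_k}(t_k-t_j)\gamma_j\right)\right\}=:s_0 ,
\]
so the entire content of the theorem is the complementary upper bound $\dimh W(x,\psi,\bt)\leq s_0$, which I plan to prove by exhibiting, for each $s>s_0$, a cover of $W(x,\psi,\bt)$ witnessing $\cH^{s}(W(x,\psi,\bt))=0$. To this end, for $\bi\in\La^*$ set $R_{\bi}:=\{\mathbf{z}\in K:|z_j-g_{\bi}(x)_j|<\psi(b^{|\bi|})^{1+t_j}\text{ for }1\leq j\leq d\}$, so that $W(x,\psi,\bt)=\limsup_{\bi}R_{\bi}$; since only finitely many words have a given length, lying in infinitely many $R_{\bi}$ forces $|\bi|\to\infty$, and hence $W(x,\psi,\bt)\subseteq\bigcup_{|\bi|\geq M}R_{\bi}$ for every $M\in\N$.

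The heart of the argument is a covering estimate for a single fractal rectangle. Fix an index $k$ attaining the minimum defining $s_0$ and put $\Sigma_k:=\sum_{j:t_j\leq t_k}(t_k-t_j)\gamma_j$. For $|\bi|=m$ with $m$ large enough that $\psi(b^m)<1$, I would cover $R_{\bi}\cap K$ by axis-parallel cubes of side $\delta_m:=2\psi(b^m)^{1+t_k}$, coordinate by coordinate. In a coordinate $j$ with $t_j>t_k$ the width $2\psi(b^m)^{1+t_j}$ of $R_{\bi}$ is already smaller than $\delta_m$, so the projection of $R_{\bi}\cap K$ meets at most two cells of the $\delta_m$-grid in that direction. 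In a coordinate $j$ with $t_j\leq t_k$, the set $K_j$ intersected with an interval of length $2\psi(b^m)^{1+t_j}$ meets $O_b(1)$ cylinders of $K_j$ of generation $\asymp(1+t_j)\log_b(1/\psi(b^m))$; each such cylinder is a scaled copy of $K_j$ and is covered by $\asymp N_j^{(t_k-t_j)\log_b(1/\psi(b^m))}=\psi(b^m)^{-(t_k-t_j)\gamma_j}$ cubes of side $\delta_m$. Multiplying over the $d$ coordinates, $R_{\bi}\cap K$ is covered by $\lesssim_{b,d}\psi(b^m)^{-\Sigma_k}$ such cubes.

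Summing over the $N^m$ words of length $m$ and over $m\geq M$ yields
\[
\cH^{s}_{\delta_M}\!\big(W(x,\psi,\bt)\big)\ \lesssim_{b,d}\ \sum_{m\geq M}N^{m}\,\psi(b^m)^{-\Sigma_k}\big(2\psi(b^m)^{1+t_k}\big)^{s}\ \asymp\ \sum_{m\geq M}b^{\,m\gamma}\,\psi(b^m)^{(1+t_k)s-\Sigma_k}.
\]
Writing $\psi(b^m)=b^{-m}u_m$ with $u_m:=b^m\psi(b^m)$, the $m$-th summand equals $b^{\,m(\gamma+\Sigma_k-(1+t_k)s)}\,u_m^{\,(1+t_k)s-\Sigma_k}$. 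By the choice of $k$ one has $(1+t_k)s_0=\gamma+\Sigma_k$, so for $s>s_0$ the exponent of $b^m$ is $-(1+t_k)(s-s_0)<0$ while the exponent of $u_m$ is $\gamma+(1+t_k)(s-s_0)>0$; since $u_m$ is bounded above by hypothesis (i), the series is dominated by a convergent geometric series and $\cH^{s}_{\delta_M}(W(x,\psi,\bt))\to0$ as $M\to\infty$ (noting $\delta_M\to0$). Thus $\cH^{s}(W(x,\psi,\bt))=0$ for every $s>s_0$, i.e.\ $\dimh W(x,\psi,\bt)\leq s_0$, which together with the lower bound is the asserted equality. Hypotheses (ii) and (iii) are used via Theorem \ref{lower bound theorem} and, more conceptually, to force $\psi(b^n)$ to decay essentially like $b^{-n}$, which is precisely the regime in which $s_0$ is the true Hausdorff dimension and not merely an upper bound.

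The step I expect to be most delicate is the covering count for $R_{\bi}\cap K$: getting the exponent $\Sigma_k$ precisely right requires separating the coordinates with $t_j>t_k$ (where the rectangle is already finer than scale $\delta_m$, hence "free") from those with $t_j\leq t_k$, counting cylinders of each $K_j$ at two different generations, and keeping careful track of the $O_b(1)$ boundary effects coming both from intervals straddling grid cells and from $g_{\bi}(x)$ lying near the edge of a level-$m$ cylinder. This is routine but is where the precise shape of the formula is determined, so it warrants a clean treatment; by contrast, the passage from the per-rectangle count to $\cH^{s}=0$ is a short computation.
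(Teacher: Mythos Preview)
Your covering strategy coincides with the paper's: write $W(x,\psi,\bt)$ as a $\limsup$ of the rectangles $R_{\bi}$, fix $k$, and cover each $R_{\bi}\cap K$ at scale $\psi(b^{|\bi|})^{1+t_k}$ by treating coordinates with $t_j\ge t_k$ trivially and using the cylinder structure of $K_j$ in coordinates with $t_j<t_k$. The paper makes the cylinder count explicit via integers $u,v$ with $b^{-u}\asymp\psi(b^m)^{1+t_j}$ and $b^{-u-v}\asymp\psi(b^m)^{1+t_k}$, arriving at the same factor $\psi(b^m)^{-(t_k-t_j)\gamma_j}$ that you sketch; your description of this step is informal but accurate.

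Where you diverge is the final convergence step. The paper rearranges the $s$-sum into the form $\sum_n(b^n\psi(b^n)^{1+\delta})^\gamma$ and invokes hypothesis~(iii). You instead substitute $\psi(b^m)=b^{-m}u_m$ and observe that, since $u_m=b^m\psi(b^m)$ is bounded by~(i), the summand is dominated by a convergent geometric series for every $s>s_0$. This is more elementary, and in fact it shows that~(iii) plays no role in the upper bound: once~(i) holds one has $(b^n\psi(b^n)^{1+\ve})^\gamma=u_n^\gamma\,\psi(b^n)^{\ve\gamma}\le u_1^{(1+\ve)\gamma}b^{-n\ve\gamma}$, so~(iii) is automatic from~(i). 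Your closing remark that~(iii) is ``used via Theorem~\ref{lower bound theorem}'' is therefore slightly off---Theorem~\ref{lower bound theorem} needs only~(i) and~(ii), and your upper bound needs only~(i)---but this does not affect the correctness of the argument.
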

As an example of an approximating function which satisifies conditions $(i)-(iii)$, one can think of $\psi(q)=\left(q(\log_{b}q)^{1/\gamma}\right)^{-1}$. This function naturally appears when one considers analogues of Dirichlet's theorem in missing digit sets (see \cite{BFR_2011, FS_2014}).
 As a corollary to Theorem \ref{upper bound theorem} we deduce the following statement which can be interpreted as a higher-dimensional weighted generalisation of \cite[Theorem 4]{LSV_2007}. In \cite[Theorem 4]{LSV_2007}, Levesley, Salp, and Velani establish the Hausdorff measure of the set of points in a one-dimensional base-$b$ missing digit set (i.e. of the form $K_b(J)$ in our present notation) which can be well-approximated by rationals with denominators which are powers of $b$. Before we state our corollary, we fix one more piece of notation. Given an approximating function $\psi: \N \to [0,\infty)$, an infinite subset $\cB \subset \N$, and $\bt = (t_1,\dots,t_d) \in \R_{\geq 0}^d$, we define
\[W_{\cB}(\psi,\bt)=\left\{x \in K: \left|x_{i}-\frac{p_i}{q}\right|<\psi(q)^{1+t_i}, 1 \leq i \leq d, \text{ for i.m. } (p_1,\dots,p_d,q) \in \Z^d \times \cB \right\}.\]

\begin{corollary} \label{LSV_equivalent}
Fix $b \in \N$ with $b \geq 3$ and let $\cB=\{b^n: n=0,1,2,\dots\}$. Let $K$ be a  higher dimensional missing digit set as defined above (with base $b$) and write $\gamma=\dimh{K}$. Furthermore, suppose that $\set{0,b-1} \subset J_i$ for every $1 \leq i \leq d$. In particular, this also means that $\diam{K}=1$. Let $\psi: \N \to [0,\infty)$ be an approximating function such that 
\begin{enumerate}[(i)]
\item{$b^{n}\psi(b^{n})$ is monotonically decreasing with $b^{n}\psi(b^{n}) \to 0$ as $n \to \infty$},
\item{$\displaystyle{\sum_{n=1}^{\infty}{(b^n \psi(b^n))^{\gamma}}=\infty}, \quad$ and}
\item{$\displaystyle{\sum_{n=1}^{\infty}{(b^n \psi(b^n)^{1+\ve})^{\gamma}}<\infty} \quad$ for every $\ve>0$.}
\end{enumerate}
Then
\begin{equation*}
\dimh W_{\cB}(\psi, \bt) = \min_{1 \leq k \leq d}\left\{\frac{1}{1+t_k}\left(\gamma + \sum_{j:t_j \leq t_k}{(t_k-t_j)\, \gamma_j}\right)\right\}.
\end{equation*}
\end{corollary}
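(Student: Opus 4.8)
The plan is to deduce Corollary \ref{LSV_equivalent} from Theorem \ref{upper bound theorem} by showing that the rational-approximation set $W_{\cB}(\psi,\bt)$ with $\cB = \{b^n : n \geq 0\}$ coincides, up to a set of the correct dimension (in fact, exactly, after a harmless modification), with the dynamically-defined set $W(x,\psi,\bt)$ for a suitable choice of basepoint $x \in K$. The key observation is the standard dictionary between the iterated function system and base-$b$ expansions: for a word $\bi \in \La^*$ with $|\bi| = n$, the map $g_{\bi}$ is an affine contraction of $[0,1]^d$ by a factor $b^{-n}$ in each coordinate, and $g_{\bi}(x)$ has $j$-th coordinate of the form $(p_j + x_j)/b^n$ where $p_j \in \Z$ is determined by the digits of $\bi$ in the $j$-th coordinate. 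Hence $|y_j - g_{\bi}(x)_j| < \psi(b^n)^{1+t_j}$ is the same as $|y_j - p_j/b^n - x_j/b^n| < \psi(b^n)^{1+t_j}$.

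First I would take $x = \mathbf{0}$, which lies in $K$ precisely because $0 \in J_i$ for every $i$ (this is why the hypothesis $\{0,b-1\} \subset J_i$ is imposed — the $0$ guarantees $\mathbf{0} \in K$ and hence is a legitimate basepoint, while the $b-1$ guarantees $\diam(K) = 1$ as stated). With $x = \mathbf{0}$ we get $g_{\bi}(\mathbf{0})_j = p_j/b^n$ exactly, so $W(\mathbf{0},\psi,\bt)$ is exactly the set of $\y \in K$ with $|y_j - p_j/b^n| < \psi(b^n)^{1+t_j}$ for all $j$, for infinitely many $\bi \in \La^*$. The remaining point is to reconcile ``for infinitely many words $\bi$'' with ``for infinitely many $(p_1,\dots,p_d,q) \in \Z^d \times \cB$''. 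In one direction this is immediate: every word $\bi$ of length $n$ produces a genuine tuple $(p_1,\dots,p_d,b^n)$ with each $p_j/b^n \in K_j$, and distinct lengths give distinct $q$, so $W(\mathbf{0},\psi,\bt) \subseteq W_{\cB}(\psi,\bt)$. For the reverse inclusion one notes that if $|y_j - p_j/b^n| < \psi(b^n)^{1+t_j} \leq \psi(b^n)$ and $\psi(b^n) \to 0$ via $b^n\psi(b^n) \to 0$ (here we finally use hypothesis (i) in full), then for $n$ large the point $p_j/b^n$ must lie within $K_j$'s $b^{-n}$-neighbourhood, and since $K_j$ is a union of $N_j$ level-$n$ cylinders each of length $b^{-n}$ spaced by gaps that are bounded below at scale $b^{-n}$... actually the cleanest route is: restricting to $y_j \in K_j$ and $|y_j - p_j/b^n|$ small forces $p_j/b^n$ to be one of the admissible base-$b$ rationals (i.e.\ $p_j/b^n \in K_j$), hence corresponds to a digit-string from $J_j$, hence to a word $\bi$. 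Then $W_{\cB}(\psi,\bt) \subseteq W(\mathbf{0},\psi,\bt)$ up to a set of smaller dimension coming from finitely many ``bad'' levels, which does not affect the Hausdorff dimension.

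Having established that $\dimh W_{\cB}(\psi,\bt) = \dimh W(\mathbf{0},\psi,\bt)$, I would simply invoke Theorem \ref{upper bound theorem} with $x = \mathbf{0}$: hypotheses (i)--(iii) of the corollary are verbatim hypotheses (i)--(iii) of the theorem (the extra clause $b^n\psi(b^n)\to 0$ in (i) only strengthens things and is what was needed above), so the theorem delivers exactly the claimed formula
\[
\dimh W(\mathbf{0},\psi,\bt) = \min_{1 \leq k \leq d}\left\{\frac{1}{1+t_k}\left(\gamma + \sum_{j : t_j \leq t_k}(t_k - t_j)\,\gamma_j\right)\right\}.
\]

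The main obstacle is the reverse inclusion $W_{\cB}(\psi,\bt) \subseteq W(\mathbf{0},\psi,\bt)$ (up to dimension): one must rule out approximating rationals $p_j/b^n$ that do \emph{not} lie in $K_j$ but happen to be extremely close to $K_j$. The point is that a rational $p_j/b^n \notin K_j$ is separated from $K_j$ by a distance $\gg b^{-n}$ (the smallest gap between $K_j$ and an excluded level-$n$ interval is a fixed constant times $b^{-n}$), whereas the approximating radius $\psi(b^n)^{1+t_j} \leq \psi(b^n) = o(b^{-n})$; so for all sufficiently large $n$ such a rational cannot contribute any $y_j \in K_j$, and finitely many small $n$ contribute a bounded set which is dimension-irrelevant. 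I would present this as a short lemma on the geometry of $K_i$ (gap size at level $n$), then assemble the two inclusions and conclude. A minor point to flag is that one should check the claim ``$\diam(K)=1$'' genuinely follows from $\{0,b-1\}\subset J_i$: the cylinder endpoints $0$ and $(b-1)/b + (b-1)/b^2 + \cdots = 1$ both lie in each $K_i$, so $\diam(K_i) = 1$ and hence $\diam(K)=1$ in the supremum norm, matching the standing hypothesis of Theorem \ref{upper bound theorem}.
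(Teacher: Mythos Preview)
Your overall strategy is correct and matches the paper's: reduce to Theorem~\ref{upper bound theorem} by sandwiching $W_{\cB}(\psi,\bt)$ between sets of the form $W(x,\psi,\bt)$. The inclusion $W(\mathbf{0},\psi,\bt) \subseteq W_{\cB}(\psi,\bt)$ is fine, as is your distance argument showing that only rationals $\p/b^n \in K$ can contribute once $\psi(b^n) < b^{-n}$.

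The gap is in the step ``$p_j/b^n \in K_j$, hence corresponds to a digit-string from $J_j$, hence to a word $\bi$''. This tacitly asserts that every $b$-adic rational in $K_j$ has the form $g_{\bi}(0)$ for some finite word $\bi$ over $J_j$, which is false. A $b$-adic rational $p/b^n \in K_j$ is an \emph{endpoint} of some level-$n$ cylinder of $K_j$, but it may be a right endpoint $g_{\bi}(1)$ rather than a left endpoint $g_{\bi}(0)$. For instance, in the middle-third Cantor set ($b=3$, $J=\{0,2\}$) one has $\tfrac{1}{3} = 0.0222\ldots_3 \in K$, yet $\tfrac{1}{3} \neq g_{\bi}(0)$ for any finite word $\bi$ over $\{0,2\}$: the only way to write $\tfrac{1}{3}$ as a finite base-$3$ sum $\sum_{i=1}^{n} a_i 3^{-i}$ is with $a_1 = 1 \notin J$. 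Consequently your claimed inclusion $W_{\cB}(\psi,\bt) \subseteq W(\mathbf{0},\psi,\bt)$ is not valid, and the discrepancy is not confined to ``finitely many bad levels'' as you suggest --- right-endpoint rationals occur at every scale.

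The fix, which is exactly what the paper does, is to use \emph{all} $2^d$ corner points $G = \{0,1\}^d \subset K$ as basepoints (here both hypotheses $0 \in J_i$ and $b-1 \in J_i$ are genuinely needed, not just for $\diam K = 1$). Every $b$-adic rational $\p/b^n \in K$ can be written as $g_{\bj}(x)$ for some $x \in G$ and $\bj \in \La^n$: in each coordinate one takes $x_j = 0$ or $x_j = 1$ according to whether $p_j/b^n$ is a left or right cylinder endpoint. This yields
\[
W_{\cB}(\psi,\bt) \subset \bigcup_{x \in G} W(x,\psi,\bt),
\]
and since $G$ is finite and Theorem~\ref{upper bound theorem} gives the same dimension for every $W(x,\psi,\bt)$ with $x \in K$, the finite stability of Hausdorff dimension delivers the required upper bound. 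Combined with the lower bound from $W(\mathbf{0},\psi,\bt) \subseteq W_{\cB}(\psi,\bt)$ (or indeed any single corner), the proof is complete.
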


\begin{proof} 

Observe that the conditions imposed in the statement of Corollary \ref{LSV_equivalent} guarantee that Theorem \ref{upper bound theorem} is applicable. Furthermore, by our assumption that $b^{n}\psi(b^{n}) \to 0$ as $n \to \infty$, we may assume without loss of generality that $\psi(b^n) < b^{-n}$ for all $n \in \N$.

Next, we note that if $\p=(p_1,\dots,p_d) \in \Z^d$ and $\frac{\p}{b^n} = \left(\frac{p_1}{b^n},\dots,\frac{p_d}{b^n}\right) \notin K$, then we must have 
\[\dist\left(\frac{\p}{b^n},K\right) \geq b^{-n}, \quad \text{where} \quad \dist(x,K)=\inf\set{\|x-y\|: y \in K}.\]  
(Recall that we use $\|\cdot\|$ to denote the supremum norm in $\R^d$.)
Thus we need only concern ourselves with pairs $(\p,q) \in \Z^{d} \times \cB$ for which $\frac{\p}{q} \in K$.

Let $G=\left\{x=(x_1,\dots,x_d) \in \{0,1\}^{d}\right\}$ and note that $G \subset K$ by the assumption that $\{0,b-1\} \subset J_{i}$ for each $1\leq i \leq d$. For any $x \in G$ and any $\bj \in \Lambda^{n}$ it is possible to write $g_{\bj}(x)=\frac{\p}{b^{n}}$ for some $\p \in (\N\cup \set{0})^{d}$. Hence
\[W(x, \psi, \bt) \subset W_{\cB}(\psi, \bt).\]
Furthermore, the set of all rational points of the form $\frac{\p}{b^{n}}$ contained in $K$ is
\begin{equation*}
\bigcup_{x \in G} \bigcup_{\bj \in \Lambda^{n}} g_{\bj}(x).
\end{equation*}
Hence
\begin{equation*} 
W_{\cB}(\psi, \bt) \subset \bigcup_{x \in G}  W(x, \psi, \bt).
 \end{equation*}
By the finite stability of Hausdorff dimension (see \cite{Falconer}), Corollary \ref{LSV_equivalent} now follows from Theorem~\ref{upper bound theorem}.
\end{proof}

Notice that in Theorem \ref{lower bound theorem}, Theorem \ref{upper bound theorem}, and Corollary \ref{LSV_equivalent}, we insist on the same underlying base $b$ in each coordinate direction. This is somewhat unsatisfactory and one might hope to be able to obtain results where we can have different bases $b_i$ in each coordinate direction. The first steps towards proving results relating to weighted approximation in this setting can be seen in \cite[Section 12]{WW2019}. Proving more general results with different bases in different coordinate directions is likely to be a very challenging problem since such sets are \emph{self-affine} and, generally speaking, self-affine sets are more difficult to deal with than self-similar or self-conformal sets. Indeed, very little is currently known even regarding non-weighted approximation in self-affine sets.

\noindent{\bf Structure of the paper:} The remainder of the paper will be arranged as follows. In Section~\ref{measure theory section} we will present some measure theoretic preliminaries which will be required for the proofs of our main results. The key tool required for proving Theorem \ref{lower bound theorem} is a mass transference principle for rectangles proved recently by Wang and Wu \cite{WW2019}. We introduce this in Section \ref{mtp section}. In Section \ref{lower bound section} we present our proof of Theorem \ref{lower bound theorem} and we conclude in Section \ref{upper bound section} with the proof of Theorem \ref{upper bound theorem}.

\section{Some Measure Theoretic Preliminaries} \label{measure theory section}

Recall that $\gamma = \dimh{K}$ and that $\gamma_i=\dimh{K_i}$ for $1 \leq i \leq d$, where $K$ and $K_i$ are as defined above. Furthermore, note that $0 < \cH^{\gamma}(K) < \infty$ and $0< \cH^{\gamma_{i}}(K_{i})<\infty$ for each $1 \leq i \leq d$, see for example \cite[Theorem 9.3]{Falconer}. Let us define the measures
\[\mu:=\frac{\cH^{\gamma}|_{K}}{\cH^{\gamma}(K)} \qquad \text{and} \qquad \mu_i:=\frac{\cH^{\gamma_i}|_{K_i}}{\cH^{\gamma_i}(K_i)} \quad \text{for each } 1 \leq i \leq d.\]
So, for $X \subset \R^d$, we have 
\[\mu(X) = \frac{\cH^{\gamma}(X \cap K)}{\cH^{\gamma}(K)}.\]
Similarly, for $X \subset \R$, for each $1 \leq i \leq d$ we have 
\[\mu_i(X) = \frac{\cH^{\gamma_i}(X \cap K_i)}{\cH^{\gamma_i}(K_i)}.\]
Note that $\mu$ defines a probability measure supported on $K$ and, for each $1 \leq i \leq d$, $\mu_i$ defines a probability measure supported on $K_i$. 
Note also that the measure $\mu$ is $\delta$-Ahlfors regular with $\delta = \gamma$ and, for each $1 \leq i \leq d$, the measure $\mu_i$ is $\delta$-Ahlfors regular with $\delta=\gamma_i$ (see, for example, \cite[Theorem 4.14]{Mattila_1999}).

We will also be interested in the product measure
\[\Mu:=\prod_{i=1}^{d}{\mu_i}.\]
We note that $\Mu$ is $\delta$-Ahlfors regular with $\delta = \gamma$. This fact follows straightforwardly from the Ahlfors regularity of each of the $\mu_i$'s.

\begin{lemma} \label{M regularity lemma}
The product measure $\Mu = \prod_{i=1}^{d}{\mu_i}$ on $\R^d$ is $\delta$-Ahlfors regular with $\delta=\gamma$.
\end{lemma}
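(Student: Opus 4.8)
The plan is to deduce the $\gamma$-Ahlfors regularity of $\Mu = \prod_{i=1}^d \mu_i$ directly from the $\gamma_i$-Ahlfors regularity of the factor measures $\mu_i$, using that we work throughout with the supremum norm on $\R^d$, so that balls $B(\x,r)$ in $\R^d$ are exactly products $\prod_{i=1}^d B(x_i,r)$ of one-dimensional balls. First I would recall what needs to be shown: there exist constants $0 < c_1 \le c_2$ and $r_0 > 0$ such that for every $\x = (x_1,\dots,x_d) \in \mathrm{supp}\,\Mu = \prod_{i=1}^d K_i = K$ and every $0 < r \le r_0$,
\[
c_1 r^{\gamma} \le \Mu(B(\x,r)) \le c_2 r^{\gamma}.
\]
By definition of the product measure and the product structure of supremum-norm balls,
\[
\Mu(B(\x,r)) = \prod_{i=1}^d \mu_i(B(x_i,r)).
\]

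Now I would invoke the $\gamma_i$-Ahlfors regularity of each $\mu_i$: there are constants $0 < c_1^{(i)} \le c_2^{(i)}$ and $r_0^{(i)} > 0$ with $c_1^{(i)} r^{\gamma_i} \le \mu_i(B(x_i,r)) \le c_2^{(i)} r^{\gamma_i}$ for all $x_i \in K_i$ and $0 < r \le r_0^{(i)}$. Taking the product over $1 \le i \le d$ and setting $r_0 = \min_i r_0^{(i)}$, $c_1 = \prod_i c_1^{(i)}$, $c_2 = \prod_i c_2^{(i)}$, we obtain
\[
c_1 \, r^{\sum_{i=1}^d \gamma_i} \le \Mu(B(\x,r)) \le c_2 \, r^{\sum_{i=1}^d \gamma_i} \qquad \text{for all } \x \in K,\ 0 < r \le r_0.
\]
Finally I would check that the exponent is correct: since $\gamma_i = \frac{\log N_i}{\log b}$ and $N = \prod_{i=1}^d N_i$, we have
\[
\sum_{i=1}^d \gamma_i = \sum_{i=1}^d \frac{\log N_i}{\log b} = \frac{\log \prod_{i=1}^d N_i}{\log b} = \frac{\log N}{\log b} = \gamma,
\]
which gives exactly the desired bound $c_1 r^\gamma \le \Mu(B(\x,r)) \le c_2 r^\gamma$.

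There is essentially no serious obstacle here; the only point requiring a moment's care is the observation that in the supremum norm a ball in $\R^d$ is genuinely a Cartesian product of intervals, so that the product-measure formula applies cleanly — in the Euclidean norm one would instead have to sandwich the Euclidean ball between two supremum-norm balls of comparable radii, which would still work but would clutter the constants. Since the paper has already fixed $\R^d$ to carry the supremum norm $\supn{\cdot}$, this subtlety does not arise, and the lemma follows immediately.
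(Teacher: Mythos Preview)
Your proof is correct and follows essentially the same route as the paper's own argument: write a supremum-norm ball as a product of one-dimensional balls, apply the $\gamma_i$-Ahlfors regularity of each $\mu_i$, and then verify that $\sum_i \gamma_i = \gamma$. Your version is in fact slightly more explicit about the constants and the radius threshold $r_0$, and your remark about the supremum norm versus the Euclidean norm is a nice clarifying touch, but the core reasoning is identical.
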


\begin{proof}
Let $B=\prod_{i=1}^{d}{B(x_i,r)}$, $r>0$, be an arbitrary ball in $\R^d$. The aim is to show that \mbox{$\Mu(B) \asymp r^{\gamma}$}. Recall that for each $1 \leq i \leq d$, the measure $\mu_i$ is $\delta$-Ahlfors regular with \mbox{$\delta = \gamma_i = \dimh{K_i} = \frac{\log{N_i}}{\log{b}}$}. Also recall that $N=\prod_{i=1}^{d}{N_i}$ and $\gamma = \dimh{K} = \frac{\log{N}}{\log{b}}$. Thus, we have
\[\Mu(B) = \prod_{i=1}^{d}{\mu_i(B(x_i,r))} \asymp \prod_{i=1}^{d}{r^{\gamma_i}} = r^{\sum_{i=1}^{d}{\gamma_i}}.\]
Note that
\[\sum_{i=1}^{d}{\gamma_i} = \sum_{i=1}^{d}{\frac{\log{N_i}}{\log{b}}} = \frac{\log(\prod_{i=1}^{d}{N_i})}{\log{b}} = \frac{\log{N}}{\log{b}} = \gamma.\]
Hence, $\Mu(B) \asymp r^{\gamma}$ as claimed.
\end{proof}

We also note that, up to a constant factor, the product measure $\Mu$ is equivalent to the measure $\mu = \frac{\cH^{\gamma}|_K}{\cH^{\gamma}(K)}$.

\begin{lemma}\label{measure equivalence lemma}
Let $\Mu=\prod_{i=1}^{d}{\mu_i}$. Then, up to a constant factor, $\Mu$ is equivalent to $\mu$; i.e. for any Borel set $F \subset \R^d$, we have $\Mu(F) \asymp \mu(F)$.
\end{lemma}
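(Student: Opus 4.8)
The plan is to exploit the fact that both $\Mu$ and $\mu$ are $\gamma$-Ahlfors regular measures supported on the same compact set $K$, and that $K$ carries a canonical ``cylinder'' structure coming from the iterated function system $\Phi$. Concretely, for a word $\bj \in \La^n$ write $K_{\bj} := g_{\bj}(K)$ for the corresponding cylinder; these are affine copies of $K$ scaled by $b^{-n}$, and $K = \bigcup_{\bj \in \La^n} K_{\bj}$ with the union having overlaps only on a set of $\mu$-measure (and $\Mu$-measure) zero. The first step is to verify the clean identity $\mu(K_{\bj}) = N^{-n} = \Mu(K_{\bj})$ for every $\bj \in \La^n$: for $\mu$ this follows from the self-similarity of $\cH^\gamma|_K$ under the similarities $g_{\bj}$ (each scales $\gamma$-dimensional Hausdorff measure by $b^{-n\gamma} = N^{-n}$), and for $\Mu = \prod_i \mu_i$ it follows coordinate-wise from the analogous one-dimensional identity $\mu_i(f^{(i)}_{\bj}(K_i)) = N_i^{-n}$, multiplied over $1 \le i \le d$ using $\prod_i N_i = N$. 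Hence $\mu$ and $\Mu$ agree on every cylinder.

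Next I would upgrade ``agreement on cylinders'' to ``mutual comparability on all Borel sets''. The cylinders $\{K_{\bj} : \bj \in \La^*\}$, together with their finite intersections, generate the Borel $\sigma$-algebra on $K$ (their diameters shrink to $0$, so they separate points and form a basis for the topology). By a standard argument, any two finite Borel measures that agree on such a generating semialgebra and are finite coincide; since $\mu$ and $\Mu$ are both probability measures agreeing on all cylinders, one gets $\mu = \Mu$ exactly as measures on $K$. Then for an arbitrary Borel set $F \subset \R^d$ we have $\Mu(F) = \Mu(F \cap K) = \mu(F \cap K) = \mu(F)$, which is even stronger than the asserted $\Mu(F) \asymp \mu(F)$.

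If one prefers to avoid the $\sigma$-algebra uniqueness machinery, there is a softer route giving only the comparability asserted: cover $F \cap K$ efficiently by cylinders. Given $F$, for each scale $n$ let $\mathcal{C}_n(F)$ be the set of words $\bj \in \La^n$ with $K_{\bj} \cap F \ne \emptyset$; then $F \cap K \subset \bigcup_{\bj \in \mathcal{C}_n(F)} K_{\bj}$, so both $\mu(F)$ and $\Mu(F)$ are at most $\#\mathcal{C}_n(F) \cdot N^{-n}$, and using Ahlfors regularity of each measure against a Vitali-type cover one gets matching lower bounds of the same order; letting $n \to \infty$ along a covering of near-optimal efficiency yields $\mu(F) \asymp \#\mathcal{C}_n(F) N^{-n} \asymp \Mu(F)$ with absolute implied constants. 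Either way, the only genuinely substantive input is the cylinder identity $\mu(K_{\bj}) = \Mu(K_{\bj})$; the main obstacle, such as it is, is simply being careful that the overlaps between distinct cylinders $K_{\bj}$, $K_{\bj'}$ with $|\bj| = |\bj'|$ are null for both measures — this holds because the $J_i$ are proper subsets of $\{0,\dots,b-1\}$, so distinct cylinders of the same generation are in fact disjoint (they are separated by a definite gap), making the additivity bookkeeping trivial.
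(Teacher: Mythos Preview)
Your proposal is correct, and in fact it proves more than the lemma asserts: you establish the exact equality $\mu = \Mu$, which the paper records separately as Proposition~\ref{measures_are_equal}. The paper's own proof of the lemma is a one-liner: it notes that $\Mu$ is $\gamma$-Ahlfors regular (Lemma~\ref{M regularity lemma}), and then invokes \cite[Proposition~2.2(a)--(b)]{Falconer_techniques}, which says that any $\delta$-Ahlfors regular measure is comparable to $\cH^{\delta}$ restricted to its support; since $\mu$ is by definition a normalised $\cH^{\gamma}|_K$, comparability follows immediately. The paper then proves exact equality in Proposition~\ref{measures_are_equal} by a different mechanism, namely the uniqueness of the Borel probability measure satisfying the self-similarity relation $m = \sum_j N^{-1} m \circ g_j^{-1}$, and checking that both $\mu$ and $\Mu$ satisfy it. Your route via the cylinder identity $\mu(K_{\bj}) = N^{-|\bj|} = \Mu(K_{\bj})$ together with a $\pi$--$\lambda$/uniqueness-of-extension argument is a perfectly good alternative that gets to equality in one step, effectively merging the lemma and the proposition.

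One small inaccuracy worth flagging: your closing parenthetical asserts that distinct cylinders of the same generation are separated by a definite gap because the $J_i$ are proper subsets of $\{0,\dots,b-1\}$. That is not quite right; if some $J_i$ contains two consecutive digits then the corresponding one-dimensional cylinders can touch at an endpoint, and in $\R^d$ the hypercubes $g_{\bj}([0,1]^d)$ can share lower-dimensional faces. What saves the argument is that any such overlap of $K_{\bj}$ and $K_{\bj'}$ is contained in an affine hyperplane, hence has $\cH^{\gamma}$-measure zero (and likewise $\Mu$-measure zero by Ahlfors regularity), so additivity over cylinders still holds. With that correction, your argument goes through cleanly.
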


Lemma \ref{measure equivalence lemma} follows immediately upon combining Lemma \ref{M regularity lemma} with \cite[Proposition 2.2 (a) + (b)]{Falconer_techniques}.

In our present setting, where $K$ is a self-similar set with well-separated components, we can actually show the stronger statement that $\mu=\Mu$.

\begin{proposition} \label{measures_are_equal}
The measures $\mu$ and $\Mu$ are equal, i.e. for every Borel set $F \subset \R^d$, we have $\mu(F) = \Mu(F)$.
\end{proposition}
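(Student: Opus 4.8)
The plan is to identify both $\mu$ and $\Mu$ with the unique self-similar (Hutchinson) measure attached to the iterated function system $\Phi$ with uniform probability weights $N^{-1}$, and then to invoke the uniqueness of such a measure \cite{Hutchinson}. The starting point is the scaling behaviour of the relevant Hausdorff measures. Since each $g_j$ is a similarity of $\R^d$ with ratio $b^{-1}$ and $b^{\gamma}=N$, for any word $\bi\in\La^*$ of length $n$ we have $\cH^{\gamma}(g_{\bi}(K))=b^{-n\gamma}\cH^{\gamma}(K)=N^{-n}\cH^{\gamma}(K)$; likewise, in each coordinate a level-$n$ cylinder $f_{j_1}\circ\cdots\circ f_{j_n}(K_i)$ has $\cH^{\gamma_i}$-measure $N_i^{-n}\cH^{\gamma_i}(K_i)$ because $b^{\gamma_i}=N_i$. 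As each $g_j$ acts coordinatewise, a cylinder $g_{\bi}(K)$ is a product of $d$ coordinate cylinders of the same level $n$, and hence
\[\mu(g_{\bi}(K)) = N^{-n} = \prod_{i=1}^{d}N_i^{-n} = \Mu(g_{\bi}(K)).\]

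Next I would check that both measures satisfy the self-similarity identity $\nu = N^{-1}\sum_{j=1}^{N}\nu\circ g_j^{-1}$. For $\Mu$ this is purely formal: each $\mu_i$ satisfies the one-dimensional instance $\mu_i = N_i^{-1}\sum_{l}\mu_i\circ f_l^{-1}$ (proved exactly as below), and expanding the product $\Mu=\prod_i\mu_i$ of these $d$ identities, together with the fact that a product of pushforwards is the pushforward of the product measure under the product map, recovers $\Mu = N^{-1}\sum_{j}\Mu\circ g_j^{-1}$. For $\mu$ one argues with the Hausdorff measure directly: for Borel $A$, the sets $A\cap g_j(K)$ cover $A\cap K$, and one has $\cH^{\gamma}(A\cap g_j(K)) = \cH^{\gamma}\bigl(g_j(g_j^{-1}(A)\cap K)\bigr) = N^{-1}\cH^{\gamma}(g_j^{-1}(A)\cap K)$ by the scaling of $\cH^{\gamma}$ under $g_j$; provided the overlaps $g_j(K)\cap g_{j'}(K)$ ($j\ne j'$) are $\cH^{\gamma}$-null, summing over $j$ gives $\mu(A)=N^{-1}\sum_j\mu(g_j^{-1}(A))$, i.e. $\mu = N^{-1}\sum_j\mu\circ g_j^{-1}$. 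Hutchinson's theorem then asserts that there is a unique Borel probability measure on $\R^d$ satisfying this identity (it is the attracting fixed point of the associated contraction on the space of probability measures under the Hutchinson metric), so $\mu=\Mu$. (Alternatively, once both measures are known to agree on every cylinder one can run a $\pi$--$\lambda$ argument, but some care is then needed because distinct cylinders may overlap.)

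The one genuinely delicate point is the null-overlap claim used for $\mu$. Here it is harmless: two distinct level-one cylinders $f_j(K_i)$, $f_{j'}(K_i)$ of a missing-digit set $K_i$ meet in at most a single point (the endpoint $a/b$ of consecutive base-$b$ digit intervals, and only when both $0$ and $b-1$ lie in $J_i$), so for $j\ne j'$ in $\La$, corresponding to tuples differing in some coordinate $i_0$, the overlap $g_j(K)\cap g_{j'}(K)=\prod_i\bigl(f_{j_i}(K_i)\cap f_{j'_i}(K_i)\bigr)$ is contained in a set of the form $\{\text{point}\}\times\prod_{i\ne i_0}K_i$, which has Hausdorff dimension at most $\gamma-\min_{1\le i\le d}\gamma_i<\gamma$ and is therefore $\cH^{\gamma}$-null; an entirely analogous computation shows the same overlaps are $\Mu$-null. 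Equivalently, one may simply note that $\Phi$ satisfies the open set condition with feasible open set $(0,1)^d$. I expect this overlap bookkeeping, rather than the uniqueness argument, to be the part most in need of being spelled out carefully.
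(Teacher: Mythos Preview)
Your proposal is correct and follows essentially the same route as the paper: both argue by showing that $\mu$ and $\Mu$ each satisfy the self-similarity identity $\nu = N^{-1}\sum_{j=1}^{N}\nu\circ g_j^{-1}$ and then invoke Hutchinson's uniqueness theorem. The paper's treatment is slightly terser---it simply asserts that $\cH^{\gamma_i}(f_{j_1}(K_i)\cap f_{j_2}(K_i))=0$ for $j_1\ne j_2$ without further comment---whereas you spell out the null-overlap justification more carefully, which is a welcome addition rather than a departure.
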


\begin{proof}
For each $1 \leq i \leq d$, there exists a unique Borel probability measure (see, for example, \cite[Theorem 2.8]{Falconer_techniques}) $m_i$ satisfying
\begin{align} \label{measure uniqueness 1}
m_i &= \sum_{j=1}^{N_i}{\frac{1}{N_i}m_i \circ f_j^{-1}}.
\end{align}
Likewise, there exists a unique Borel probability measure $m$ satisfying
\begin{align} \label{measure uniqueness 2}
    m &= \sum_{j=1}^{N}{\frac{1}{N}m \circ g_j^{-1}}.
\end{align}

We begin by showing that $\mu_i$ satisfies \eqref{measure uniqueness 1} for each $1 \leq i \leq d$. Note that $\cH^{\gamma_i}(f_{j_1}(K_i) \cap f_{j_2}(K_i)) = 0$ for any $1\leq j_1,j_2 \leq N_i$ with $j_1 \neq j_2$. Thus, for any Borel set $X \subset \R^d$, , we have
\begin{align*}
    \mu_i(X) &= \frac{1}{\cH^{\gamma_i}(K_i)}\cH^{\gamma_i}(X \cap K_{i}) \\
             &= \frac{1}{\cH^{\gamma_i}(K_i)}\sum_{j=1}^{N_i}{\cH^{\gamma_i}(X \cap f_j(K_i))} \\
             &= \frac{1}{\cH^{\gamma_i}(K_i)}\sum_{j=1}^{N_i}{\cH^{\gamma_i}(f_j(f_j^{-1}(X) \cap K_i))} \\
             &= \frac{1}{\cH^{\gamma_i}(K_i)}\sum_{j=1}^{N_i}{\left(\frac{1}{b}\right)^{\gamma_i}\cH^{\gamma_i}(f_j^{-1}(X) \cap K_i)} \\
             &= \frac{1}{\cH^{\gamma_i}(K_i)}\sum_{j=1}^{N_i}{\frac{1}{N_i}\cH^{\gamma_i}(f_j^{-1}(X) \cap K_i)} \\
             &= \sum_{j=1}^{N_i}{\frac{1}{N_i}\mu_i \circ f_j^{-1}(X)}.
\end{align*}

By an almost identical argument, it can be shown that $\mu$ satisfies $\eqref{measure uniqueness 2}$.

Finally, we show that $\Mu$ also satisfies \eqref{measure uniqueness 2} and, hence, by the uniqueness of solutions to~\eqref{measure uniqueness 2}, we conclude that $\Mu$ must be equal to $\mu$. Since $\mu_i$ satisfies \eqref{measure uniqueness 1} for each $1 \leq i \leq d$, we have 
\begin{align*}
\Mu &= \prod_{i=1}^{d}{\mu_i} \\
    &= \prod_{i=1}^{d}{\left(\sum_{j=1}^{N_i}{\frac{1}{N_i}\mu_i \circ f_j^{-1}}\right)} \\
    &= \sum_{\bj = (j_1, \dots, j_d) \in \prod_{i=1}^{d}{\{1,\dots,N_i\}}}{\frac{1}{N}\prod_{i=1}^{d}{\mu_i \circ f_{j_i}^{-1}}} \\
    &= \sum_{j=1}^{N}{\frac{1}{N}M\circ g_j^{-1}}. \qedhere
\end{align*}
\end{proof}

\section{Mass transference principle for rectangles} \label{mtp section}

To prove Theorem \ref{lower bound theorem}, we will use the mass transference principle for rectangles established recently by Wang and Wu in \cite{WW2019}. The work of Wang and Wu generalises the famous Mass Transference Principle originally proved by Beresnevich and Velani \cite{BV_MTP}. Since its initial discovery in \cite{BV_MTP}, the Mass Transference Principle has found many applications, especially in Diophantine Approximation, and has by now been extended in numerous directions. See \cite{Allen-Beresnevich, Allen-Baker, BV_MTP, BV_Slicing, Koivusalo-Rams, WWX2015, WW2019, Zhong2021} and references therein for further information. Here we shall state the general ``full measure'' mass transference principle from rectangles to rectangles established by Wang and Wu in \cite[Theorem~3.4]{WW2019}.

Fix an integer $d \geq 1$. For each $1 \leq i \leq d$, let $(X,|\cdot|_i,m_i)$ be a bounded locally compact metric space equipped with a $\delta_i$-Ahlfors regular probability measure $m_i$. We consider the product space $(X, |\cdot|, m)$ where
\[X = \prod_{i=1}^{d}{X_i}, \qquad |\cdot|=\max_{1 \leq i \leq d}{|\cdot|_i}, \quad \text{and} \quad m=\prod_{i=1}^{d}{m_i}.\]
Note that a ball $B(x,r)$ in $X$ is the product of balls in $\{X_i\}_{1 \leq i \leq d}$;
\[B(x,r) = \prod_{i=1}^{d}{B(x_i,r)} \quad \text{for} \quad x=(x_1,\dots,x_d).\]

Let $J$ be an infinite countable index set and let $\beta: J \to \R_{\geq 0}: \alpha \mapsto \beta_{\alpha}$ be a positive function such that for any $M > 1$, the set
\[\{\alpha \in J: \beta_{\alpha} < M\}\]
is finite. Let $\rho: \R_{\geq 0} \to \R_{\geq 0}$ be a non-increasing function such that $\rho(u) \to 0$ as $u \to \infty$.

For each $1 \leq i \leq d$, let $\{R_{\alpha,i}: \alpha \in J\}$ be a sequence of subsets of $X_i$. Then, the \emph{resonant sets} in $X$ that we will be concerned with are
\[\left\{R_{\alpha}=\prod_{i=1}^{d}{R_{\alpha,i}:\alpha \in J}\right\}.\]
For a vector $\ba = (a_1,\dots,a_d) \in \R_{>0}^d$, write
\[\Delta(R_{\alpha}, \rho(\beta_{\alpha})^{\ba}) = \prod_{i=1}^{d}{\Delta(R_{\alpha,i},\rho(\beta_{\alpha})^{a_i})},\]
where $\Delta(R_{\alpha,i},\rho(\bal)^{a_i})$ appearing on the right-hand side denotes the $\rho(\bal)^{a_i}$-neighbourhood of $R_{\alpha,i}$ in $X_i$. We call $\Delta(R_{\alpha,i},\rho(\beta_{\alpha})^{a_i})$ the \emph{part of $\Delta(R_{\alpha},\rho(\beta_{\alpha})^{\ba})$ in the $i$th direction.}

Fix $\ba = (a_1, \dots, a_d) \in \R_{>0}^d$ and suppose $\bt = (t_1,\dots,t_d) \in \R_{\geq 0}^d$. We are interested in the set
\[W_{\ba}(\bt) = \left\{x \in X: x \in \Delta(R_{\alpha},\rho(\beta_{\alpha})^{\ba+\bt}) \quad \text{for i.m. } \alpha \in J \right\}.\]
We can think of $\Delta(R_{\alpha},\rho(\beta_{\alpha})^{\ba+\bt})$ as a smaller ``rectangle'' obtained by shrinking the ``rectangle'' $\Delta(R_{\alpha},\rho(\beta_{\alpha})^{\ba})$.

Finally, we require that the resonant sets satisfy a certain \emph{$\kappa$-scaling} property, which in essence ensures that locally our sets behave like affine subspaces.

\begin{definition} \label{kappa scaling}
Let $0 \leq \kappa < 1$. For each $1 \leq i \leq d$, we say that $\{R_{\alpha,i}\}_{\alpha \in J}$ has the \emph{$\kappa$-scaling property} if for any $\alpha \in J$ and any ball $B(x,r)$ in $X_i$ with centre $x_i \in R_{\alpha,i}$ and radius $r>0$, for any $ 0 < \varepsilon < r$, we have
\[c_1 r^{\delta_i \kappa}\varepsilon^{\delta_i(1-\kappa
)} \leq m_i(B(x_i,r) \cap \Delta(R_{\alpha,i},\varepsilon)) \leq c_2 r^{\delta_i \kappa} \varepsilon^{\delta_i(1-\kappa)}\]
for some absolute constants $c_1, c_2 > 0$.
\end{definition}
In our case $\kappa=0$ since our resonant sets are points. For justification of this, and calculations of $\kappa$ for other resonant sets, see \cite{Allen-Baker}. Wang and Wu established the following mass transference principle for rectangles in \cite{WW2019}.

\begin{theorem}[Wang -- Wu, \cite{WW2019}] \label{Theorem WW}
Assume that for each $1 \leq i \leq d$, the measure $m_i$ is $\delta_i$-Ahlfors regular and that the resonant set $R_{\alpha,i}$ has the $\kappa$-scaling property for $\alpha \in J$. Suppose
\[m\left(\limsup_{\substack{\alpha \in J \\ \beta_{\alpha} \to \infty}}{\Delta(R_{\alpha},\rho(\beta_{\alpha})^{\ba}})\right) = m(X).\]
Then we have
\[\dimh{W_{\ba}(\bt)} \geq s(\bt):= \min_{A \in \cA}\left\{\sum_{k \in \cK_1}{\delta_k} + \sum_{k \in \cK_2}{\delta_k} + \kappa \sum_{k \in \cK_3}{\delta_k}+(1-\kappa)\frac{\sum_{k \in \cK_3}{a_k \delta_k} - \sum_{k \in \cK_2}{t_k \delta_k}}{A}\right\},\]
where
\[\cA = \{a_i, a_i+t_i: 1 \leq i \leq d\}\]
and for each $A \in \cA$, the sets $\cK_1, \cK_2, \cK_3$ are defined as
\[\cK_1 = \{k: a_k \geq A\}, \quad \cK_2=\{k: a_k+t_k \leq A\} \setminus \cK_1, \quad \cK_3=\{1,\dots,d\}\setminus (\cK_1 \cup \cK_2)\]
and thus give a partition of $\{1,\dots,d\}$.
\end{theorem}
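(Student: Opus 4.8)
The plan is to establish the dimension lower bound by the standard route for mass transference results: construct a Cantor-like subset $\cC \subseteq W_{\ba}(\bt)$ together with a probability measure $\nu$ supported on $\cC$, and then invoke the mass distribution principle. Fixing any $s < s(\bt)$, it suffices to produce such a $\nu$ satisfying a Frostman-type condition $\nu(B(x,r)) \lesssim r^{s}$ for all sufficiently small $r$; the mass distribution principle then gives $\dimh \cC \ge s$, and letting $s \uparrow s(\bt)$ yields the theorem. The hypothesis $m(\limsup_{\beta_\alpha \to \infty} \Delta(R_\alpha, \rho(\beta_\alpha)^{\ba})) = m(X)$ is the engine driving the construction: since $m$ is $\delta$-Ahlfors regular, hence doubling, the Lebesgue density theorem upgrades this global full-measure statement to a \emph{local} one, so that inside any ball and at arbitrarily fine scales one can find a large, essentially disjoint collection of full-size rectangles $\Delta(R_\alpha, \rho(\beta_\alpha)^{\ba})$. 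This local ubiquity is the input I would exploit at every generation of the construction.

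First I would set up the inductive construction of $\cC$. Beginning inside a fixed ball, I use local ubiquity to select many disjoint large rectangles $\Delta(R_\alpha, \rho(\beta_\alpha)^{\ba})$ contained in it, with $\rho(\beta_\alpha)$ taken small enough that the geometry at this generation dominates that of the previous one. Each selected large rectangle is then replaced by the corresponding shrunk rectangle $\Delta(R_\alpha, \rho(\beta_\alpha)^{\ba+\bt})$; these are genuine member-generating sets for $W_{\ba}(\bt)$, since any point lying in infinitely many of them belongs to $W_{\ba}(\bt)$ by definition. Because the shrunk rectangle has side length $\rho(\beta_\alpha)^{a_i+t_i}$ in the $i$th direction and is therefore anisotropic, I subdivide it into a grid of smaller, more isotropic sub-rectangles on which the next generation will be built, distributing the parent's mass across the children in proportion to $m$. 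Here the $\kappa$-scaling property of the resonant sets, together with the Ahlfors regularity of each $m_i$, controls both the number of children and their individual measures. Iterating produces the nested family defining $\cC$ and the measure $\nu$.

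The main obstacle, and the technical heart of the argument, is verifying the Frostman bound $\nu(B(x,r)) \lesssim r^{s}$ uniformly in $r$ and across all generations. Because the generations are anisotropic, the correct local exponent for a ball $B(x,r)$ depends on how $r$ compares with the various side lengths $\rho(\beta_\alpha)^{a_i}$ and $\rho(\beta_\alpha)^{a_i+t_i}$ over all directions $i$ and all generations. Writing $r \asymp \rho(\beta_\alpha)^{A}$ for the relevant generation, the directions split exactly as in the statement: when $a_k \ge A$ (so $k \in \cK_1$) the ball already engulfs the large rectangle in that direction and contributes its full dimension $\delta_k$; when $a_k + t_k \le A$ (so $k \in \cK_2$) the ball lies within the extent of the shrunk rectangle and contributes $\delta_k$ diminished by the deficit $(1-\kappa)t_k\delta_k/A$; and the transitional directions $\cK_3$, where $a_k < A < a_k + t_k$, contribute $\kappa\delta_k + (1-\kappa)a_k\delta_k/A$ through the $\kappa$-scaling behaviour of the resonant sets. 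Summing these contributions and optimising the resulting local exponent over the admissible thresholds $A \in \cA$ is precisely what forces the minimum in the definition of $s(\bt)$; showing that the worst case over all $r$ and all generations never falls below this bound is where the bulk of the estimation lies.

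Finally, I would record that in the setting of interest the resonant sets $R_{\alpha,i}$ are points, so $\kappa = 0$, which both trivialises the $\kappa$-scaling verification and specialises the $\cK_3$ contribution to the single term $a_k\delta_k/A$. With the Frostman bound established for every $s < s(\bt)$, the mass distribution principle delivers $\dimh W_{\ba}(\bt) \ge \dimh \cC \ge s$, and taking the supremum over such $s$ completes the proof.
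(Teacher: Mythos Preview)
The paper does not prove this statement. Theorem~\ref{Theorem WW} is quoted verbatim from Wang and Wu \cite{WW2019} as an external tool and is applied, not established, here; the paper's own work is confined to verifying its hypotheses in the specific setting of higher-dimensional missing digit sets (Section~\ref{lower bound section}) and then simplifying the resulting formula for $s(\bt)$. There is therefore no proof in the paper to compare your proposal against.

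Your sketch is a reasonable high-level outline of the strategy Wang and Wu actually pursue in \cite{WW2019}: local ubiquity from full measure plus Ahlfors regularity, a nested Cantor construction inside shrunk rectangles, redistribution of mass via the $\kappa$-scaling property, and a case analysis of $\nu(B(x,r))$ according to where $r$ sits relative to the scales $\rho(\beta_\alpha)^{a_i}$ and $\rho(\beta_\alpha)^{a_i+t_i}$, leading to the partition $\cK_1,\cK_2,\cK_3$. But be aware that the actual execution in \cite{WW2019} is substantially more delicate than your outline suggests: the construction involves a $K_{G,B}$-type optimal covering lemma, careful separation and counting of sub-rectangles across generations, and a multi-scale H\"older estimate that occupies the bulk of that paper. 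Your last paragraph, specialising to $\kappa=0$ because ``in the setting of interest the resonant sets are points'', is a remark about the \emph{application} in the present paper, not about the general theorem you were asked to prove; it does not belong in a proof of Theorem~\ref{Theorem WW} itself.
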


\section{Proof of Theorem \ref{lower bound theorem}} \label{lower bound section}

To prove Theorem \ref{lower bound theorem}, we will apply Theorem \ref{Theorem WW} with $X_i = K_i$, $m_i=\mu_i$ and $\abs{\cdot}_i = \abs{\cdot}$ (absolute value in $\R$) for each $1 \leq i \leq d$. Then, in our setting, we will be interested in the product space $(X, \supn{\cdot}, \Mu)$ where
\[X = \prod_{i=1}^{d}{K_i} \; = K, \qquad \Mu = \prod_{i=1}^{d}{\mu_i},\]
and $\supn{\cdot}$ denotes the supremum norm in $\R^d$. Recall that for each $1 \leq i \leq d$, the measure $\mu_i$ is $\delta_i$-Ahlfors regular with 
\[\delta_i = \gamma_i = \dimh{K_i}\]
and the measure $\Mu$ is $\delta$-Ahlfors regular with
\[\delta = \gamma = \dimh{K}.\]

For us, the appropriate indexing set is 
\[\cJ = \{\bi \in \La^*\}.\]
We define our \emph{weight function} $\beta: \La^* \to \R_{\geq 0}$ by
\[\beta_{|\bi|} = \beta(\bi) = |\bi|.\]
Note that $\beta$ satisfies the requirement that for any real number $M > 1$ the set $\set{\bi \in \La^*: \beta_{\bi} < M}$ is finite. Next we define $\rho: \R_{\geq 0} \to \R_{\geq 0}$ by
\[\rho(u) = \psi(b^u).\]
Since $b^n\psi(b^n)$ is monotonically decreasing by assumption, it follows that $\psi(b^n)$ is monotonically decreasing and $\psi(b^n) \to 0$ as $n \to \infty$.

For a fixed $x = (x_1,\dots,x_d) \in K$, we define the resonant sets of interest as follows. For each $\bi \in \La^*$, take
\[R_{\bi}^x=g_{\bi}(x).\]
Correspondingly, for each $1 \leq j \leq d$, 
\[R_{\bi,j}^x = g_{\bi}(x)_j,\]
where $g_{\bi}(x) = (g_{\bi}(x)_1, \dots, g_{\bi}(x)_d)$. So, $R_{\bi,j}^x$ is the coordinate of $g_{\bi}(x)$ in the $j$th direction. In each coordinate direction, the $\kappa$-scaling property is satisfied with $\kappa$=0, since our resonant sets are points.

Let us fix $\ba = (1,1,\dots,1) \in \R_{>0}^d$. Then, in this case, we note that 
\[\limsup_{\substack{\alpha \in \cJ \\ \bal \to \infty}}{\Delta(R_{\alpha}^{x}, \rho(\bal)^{\ba})} = \limsup_{\substack{\bi \in \La^* \\ |\bi| \to \infty}}{\Delta(g_{\bi}(x), \psi(b^{|\bi|})^{\ba})} = W(x,\psi),\]
where $W(x,\psi)$ is as defined in \eqref{W x psi}. Moreover, it follows from Corollary \ref{W x psi corollary} and Proposition \ref{measures_are_equal} that $\Mu(W(x,\psi)) = \Mu(K)$, since we assumed that $\sum_{n=1}^{\infty}{(b^n\psi(b^n))^{\gamma}} = \infty$.

Now suppose that $\bt = (t_1,\dots,t_d) \in \R_{\geq 0}^d$. Then, in our case, 
\[W_{\ba}(\bt) = W(x,\psi,\bt),\]
which is the set we are interested in. So, recalling that $\kappa = 0$ in our setting, we may now apply Theorem \ref{Theorem WW} directly to conclude that
\[\dimh{W(x,\psi,\bt)} \geq \min_{A \in \cA}\left\{\sum_{k \in \cK_1}{\delta_k} + \sum_{k \in \cK_2}{\delta_k}+\frac{\sum_{k \in \cK_3}{\delta_k}-\sum_{k \in \cK_2}{t_k \delta_k}}{A}\right\} =: s(\bt),\]
where
\[\cA = \set{1} \cup \set{1+t_i: 1 \leq i \leq d}\]
and for each $A \in \cA$ the sets $\cK_1, \cK_2, \cK_3$ are defined as follows:
\[\cK_1 = \set{k: 1 \geq A}, \quad \cK_2 = \{k: 1+t_k \leq A\} \setminus \cK_1, \quad \text{and} \quad \cK_3 = \set{1,\dots,d} \setminus (\cK_1 \cup \cK_2). \]
Note that $\cK_1, \cK_2, \cK_3$ give a partition of $\set{1,\dots,d}$.

To obtain a neater expression for $s(\bt)$, as given in the statement of Theorem \ref{lower bound theorem}, we consider the possible cases which may arise. To this end, let us suppose, without loss of generality, that 
\[0 < t_{i_1} \leq t_{i_2} \leq \dots \leq t_{i_d}.\]

\smallskip
\underline{{\bf Case 1: $A=1$}}

If $A = 1$, then $\cK_1 = \set{1,\dots,d}$, $\cK_2 = \emptyset$, and $\cK_3 = \emptyset$. In this case, the ``dimension number'' simplifies to
\[\sum_{j=1}^{d}{\delta_{j}} = \sum_{j=1}^{d}{\dimh{K_j}} = \sum_{j=1}^{d}{\frac{\log{N_j}}{\log{b}}} = \frac{\log{\left(\prod_{j=1}^{d}N_{j}\right)}}{\log{b}} = \frac{\log{N}}{\log{b}} = \dimh{K}.\]

\smallskip
\underline{{\bf Case 2:} $A=1+t_{i_k}$ with $t_{i_k}>0$}
\nopagebreak

Suppose $A= 1+t_{i_k}$ for some $1 \leq k \leq d$ and that $t_{i_k}>0$ (otherwise we are in Case 1). Suppose $k \leq k' \leq d$ is the maximal index such that $t_{i_k} = t_{i_{k'}}$. In this case, 
\[\cK_1 = \emptyset, \qquad \cK_2 = \set{i_1, \dots, i_{k'}}, \quad \text{and} \quad \cK_3 = \set{i_{k'+1},\dots,i_d}\]
and the ``dimension number'' is
\begin{align*}
    \sum_{j=1}^{k'}{\delta_{i_j}} + \frac{\sum_{j=k'+1}^{d}{\delta_{i_j}}-\sum_{j=1}^{k'}{t_{i_j}\delta_{i_j}}}{1+t_{i_k}} &= \frac{1}{1+t_{i_k}}\left((1+t_{i_k})\sum_{j=1}^{k'}{\delta_{i_j}} + \sum_{j=k'+1}^{d}{\delta_{i_j}}-\sum_{j=1}^{k'}{t_{i_j}\delta_{i_j}}\right) \\
          &= \frac{1}{1+t_{i_k}}\left(\sum_{j=1}^{d}{\delta_{i_j}}+\sum_{j=1}^{k'}{\delta_{i_j}(t_{i_k}-t_{i_j})}\right) \\
          &= \frac{1}{1+t_{i_k}}\left(\dimh{K} + \sum_{j=1}^{k'}{(t_{i_k}-t_{i_j})\dimh{K_j}}\right).
\end{align*}

Putting the two cases together, we conclude that 
\[\dimh{W(x,\psi,\bt)} \geq \min_{1 \leq k \leq d}\left\{\frac{1}{1+t_k}\left(\gamma + \sum_{j:t_j \leq t_k}{(t_k-t_j)\gamma_{j}}\right)\right\},\]
as claimed. This completes the proof of Theorem \ref{lower bound theorem}.

\section{Proof of Theorem \ref{upper bound theorem}} \label{upper bound section}
Let 
\begin{equation*}
\cA_{n}(x, \psi, \bt):= \bigcup_{\bi \in \Lambda^{n}} \Delta\left(R^{x}_{\bi}, \psi(b^{n})^{1+\bt}\right)=\bigcup_{\bi \in \Lambda^{n}}\prod_{j=1}^{d}B\left( R^{x}_{\bi,j}, \psi(b^{n})^{1+t_{j}} \right).
\end{equation*}
Then
\begin{equation*}
W(x, \psi, \bt) = \limsup_{n \to \infty} \cA_{n}(x, \psi, \bt) \, .
\end{equation*}
For any $m \in \N$ we have that 
\begin{equation} \label{cover_1}
W(x, \psi, \bt) \subset \bigcup_{n \geq m} \cA_{n}(x, \psi, \bt) \, .
\end{equation}
Observe that $\cA_{n}(x, \psi, \bt)$ is a collection of $N^{n}=(b^{n})^{\gamma}$ rectangles with sidelengths $2\psi(b^{n})^{1+t_{j}}$ in each $j$th coordinate axis.

Fix some $1 \leq k \leq d$. Throughout suppose that $n$ is sufficiently large such that $\psi(b^{n})<1$. Condition $(i)$ of Theorem \ref{upper bound theorem} implies that $\psi(b^{n})^{1+t_{k}} \leq \psi(b^{n}) \to 0$ as $n \to \infty$, and so for any $\rho>0$ there exists a sufficiently large positive integer $n_{0}(\rho)$ such that 
\begin{equation*}
\psi(b^{n})^{1+t_{k}} \leq \rho \quad \text{ for all } n \geq n_{0}(\rho).
\end{equation*}
 Suppose $n \geq n_0(\rho)$ and that for each $1 \leq j \leq d$ we can construct an efficient finite $\psi(b^{n})^{1+t_{k}}$-cover $\cB_{j}(\bi,k,\rho)$ for $B\left(R^{x}_{\bi,j}, \psi(b^{n})^{1+t_{j}}\right)$ with cardinality $\#\cB_{j}(\bi,k,\rho)$ for each $\bi \in \Lambda^{n}$. Then we can construct a $\psi(b^{n})^{1+t_{k}}$-cover of
$\Delta\left(R^{x}_{\bi}, \psi(b^{n})^{1+\bt}\right)$ for each $\bi \in \Lambda^{n}$ with cardinality $\prod_{j=1}^{d}\#\cB_{j}(\bi,k,\rho)$ by considering the Cartesian product of the individual covers $\cB_j(\bi, k, \rho)$ for each $1 \leq j \leq d$. By \eqref{cover_1}
\begin{equation} \label{cover_2}
 \bigcup_{n \geq n_{0}(\rho)} \cA_{n}(x, \psi, \bt)
\end{equation}
is a cover of $W(x, \psi, \bt)$. So, supposing that we can find such covers $\cB_{j}(\bi,k,\rho)$, we have that
\begin{equation*}
\bigcup_{n \geq n_{0}(\rho)}\bigcup_{\bi \in \Lambda^{n}}\prod_{j=1}^{d}\cB_{j}(\bi,k,\rho)
\end{equation*}
is a $\psi(b^{n})^{1+t_{k}}$-cover of $W(x,\psi,\bt)$.
 
To calculate the values $\#\cB_{j}(\bi,k,\rho)$ we consider two possible cases depending on the fixed $1\leq k\leq d$. Without loss of generality suppose that $0<t_{1}\leq t_{2} \leq \dots \leq t_{d}$. Then, since we are assuming that $\psi(b^{n})<1$, we have that $\psi(b^{n})^{1+t_{1}}\geq \dots \geq \psi(b^{n})^{1+t_{d}}$. 

\smallskip
\underline{{\bf Case 1: $t_{j} \geq t_{k}$}}
 
In this case, $\psi(b^{n})^{1+t_{k}} \geq \psi(b^{n})^{1+t_{j}}$ and so, for any $\bi \in \Lambda^{n}$, we have
\begin{equation*}
B\left(R^{x}_{\bi,j}, \psi(b^{n})^{1+t_{k}}\right) \supset B\left(R^{x}_{\bi,j}, \psi(b^{|\bi|})^{1+t_{j}}\right).
\end{equation*}
Hence, we may take our covers to be $\cB(\bi,k, \rho)=B\left(R^{x}_{\bi,j}, \psi(b^{n})^{1+t_{k}}\right)$, and so $\#\cB_{j}(\bi,k,\rho)=1$.

\smallskip
\underline{{\bf Case 2: $t_{j} < t_{k}$}}

In this case, $\psi(b^{n})^{1+t_{k}} < 2\psi(b^{n})^{1+t_{j}}$. Let $u \in \N$ be the unique integer such that
\begin{equation} \label{u_bound}
b^{-u} \leq 2\psi(b^{n})^{1+t_{j}} < b^{-u+1},
\end{equation}
and observe that, for any $\bi \in \Lambda^{n}$, we have 
\begin{equation*}
B\left(R^{x}_{\bi,j}, \psi(b^{|\bi|})^{1+t_{j}}\right) \subset \bigcup_{\substack{\ba = (a_{1},\dots,a_{u-1}) \in \Lambda_{j}^{u-1} \\ f_{a_i} \in \Phi^{j},\, 1 \leq i \leq u-1}} f_{\ba}([0,1]),
\end{equation*}
where $\Lambda_j = \{1,\dots,N_j\}$. Let $A$ denote the set of $\ba \in \Lambda_{j}^{u-1}$ such that
\begin{equation*}
 f_{\ba}([0,1]) \cap B\left(R^{x}_{\bi,j}, \psi(b^{n})^{1+t_{j}}\right)\neq \emptyset.
 \end{equation*}
Note by the definition of $u$, and the fact that the mappings $f_{\ba}$ of the same length are pairwise disjoint up to possibly a single point of intersection, that $\#A \leq 2$ since 
\begin{equation*}
\diam \left(f_{\ba}([0,1])\right)=b^{-(u-1)} > \diam \left( B\left(R^{x}_{\bi,j}, \psi(b^{n})^{1+t_{j}}\right) \right).
\end{equation*}
 Observe that $f_{\bb}([0,1]) \subset f_{\ba}([0,1])$ if and only if $\bb=\ba\bc$ for $\bc \in \Lambda^{*}_{j}:=\bigcup_{n=0}^{\infty}{\Lambda_j^n}$, where we write $\ba\bc$ to denote the concatenation of the two words $\ba$ and $\bc$. Let $v \geq 0$ be the unique integer such that 
\begin{equation} \label{v_bound}
 b^{-u-v} \leq \psi(b^{n})^{1+t_{k}} < b^{-u-v+1}.
 \end{equation}
Note that $v$ is well defined since $\psi(b^{n})^{1+t_{k}} < 2\psi(b^{n})^{1+t_{j}} < b^{-u+1}$, and so $v \geq 0$. Then 
\begin{equation*}
\underset{\bc \in \Lambda_{j}^{v}}{\bigcup_{\ba \in A, } }f_{\ba \bc}([0,1]) \supset B\left(R^{x}_{\bi,j}, \psi(b^{|\bi|})^{1+t_{j}}\right).
\end{equation*}
Notice that the left-hand side above gives rise to a $\psi(b^n)^{1+t_k}$-cover for the right-hand side and let us denote this cover by $\cB_{j}(\bi,k,\rho)$. By the above arguments an easy upper bound on $\#\cB_{j}(\bi,k,\rho)$ is seen to be $2N_{j}^{v}$. Furthermore, by \eqref{u_bound} and \eqref{v_bound} we have that
\begin{equation*}
\#\cB_{j}(\bi,k,\rho) \leq 2N_{j}^{v} = 2 (b^{v})^{\gamma_{j}} \overset{\eqref{v_bound}}{\leq} 2\left(b^{1-u}\psi(b^{n})^{-1-t_{k}}\right)^{\gamma_{j}} \overset{\eqref{u_bound}}{\leq}2^{1+\gamma_j}b^{\gamma_{j}}\psi(b^{n})^{(t_{j}-t_{k})\gamma_{j}}.
\end{equation*}

Summing over $1 \leq j \leq d$ and $\bi \in \Lambda^n$ for each $n \geq n_0(\rho)$ we see that
 \begin{align}
\cH^{s}_{\rho}(W(x, \psi, \bt)) \, & \ll \sum_{n \geq n_0(\rho)}{\left(\left(\psi(b^n)^{1+t_k}\right)^s \times \sum_{\bi \in \Lambda^n}{\prod_{j=1}^{d}{\#\cB_j(\bi,k,\rho)}}\right)} \nonumber \\ 
&\ll \sum_{n\geq n_{0}(\rho)} \left(\psi(b^{n})^{1+t_{k}}\right)^{s}N^{n} \prod_{j:t_{j}<t_{k}} b^{\gamma_{j}}\psi(b^{n})^{(t_{j}-t_{k})\gamma_{j}} \nonumber \\
& \ll \sum_{n\geq n_{0}(\rho)} \psi(b^{n})^{s(1+t_{k})+\sum_{j:t_{j}<t_{k}}(t_{j}-t_{k})\gamma_{j} - \gamma} \left( \psi(b^{n})b^{n} \right)^{\gamma}.
\end{align}
Thus, it follows from condition $(iii)$ in Theorem \ref{upper bound theorem} that for any
\[s \geq s_{0}=\frac{\gamma+\sum_{j:t_j<t_k}(t_{k}-t_{j})\gamma_{j}+\delta\gamma}{1+t_{k}} \quad \text{with } \delta>0,\] 
we have
\[\cH^{s}_{\rho}(W(x,\psi,\bt)) \to 0 \quad \text{as } \rho \to 0.\]
 This implies that $\dimh W(x, \psi, \bt) \leq s_{0}$. The above argument holds for any initial choice of~$k$, and so we conclude that
\begin{equation*}
\dimh W(x, \psi, \bt) \leq \min_{1 \leq k \leq d} \left\{ \frac{1}{1+t_{k}}\left(\gamma+ \sum_{j:t_j<t_k}(t_{k}-t_{j})\gamma_{j}\right) \right\}.
\end{equation*}
Combining this upper bound with the lower bound result from Theorem \ref{lower bound theorem} completes the proof of Theorem \ref{upper bound theorem}.

\smallskip
\noindent{\bf Acknowledgements.} The authors are grateful to Bal\'{a}zs B\'{a}r\'{a}ny, Victor Beresnevich, Jason Levesley, Baowei Wang, and Wenmin Zhong for useful discussions.

\bibliographystyle{plain}

\begin{minipage}{0.5\linewidth}
  \begin{flushleft} {\footnotesize
      D.~Allen \\
      Coll. of Eng., Maths. and Phys. Sci. \\
      University of Exeter \\
      Harrison Building \\
      North Park Road \\
      Exeter \\ 
      EX4 4QF, UK \\
      \texttt{d.d.allen@exeter.ac.uk} }
  \end{flushleft}
\end{minipage}
\begin{minipage}{0.5\linewidth}
  \begin{flushleft} {\footnotesize
      B.~Ward \\
      Department of Mathematics \\
      University of York \\
      Heslington \\
      York \\ YO10 5DD, UK \\
      \texttt{benjamin.ward@york.ac.uk} }
  \end{flushleft}
\end{minipage}

\end{document}